\documentclass[11pt,reqno]{amsart}

\usepackage{amsmath,amssymb,amsfonts,amsthm,enumitem,mathtools}
\usepackage[colorlinks,linkcolor=magenta,anchorcolor=blue,citecolor=green]{hyperref}

\newcommand\A{\mathrm{A}} \newcommand\Alt{\mathrm{Alt}} \newcommand\Aut{\mathrm{Aut}} \newcommand\C{\mathrm{C}} \newcommand\Cay{\mathrm{Cay}}
\newcommand\Cen{\mathbf{C}} \newcommand\Hol{\mathrm{Hol}} \newcommand\Inn{\mathrm{Inn}} \newcommand\M{\mathrm{M}} \newcommand\Nor{\mathbf{N}}
\newcommand\Out{\mathrm{Out}} \newcommand\PGL{\mathrm{PGL}}  \newcommand\PSL{\mathrm{PSL}} 
\newcommand\Sy{\mathrm{S}} \newcommand\Sym{\mathrm{Sym}} \DeclareMathOperator{\Soc}{Soc}

\newtheorem{theorem}{Theorem}[section]
\newtheorem{lemma}[theorem]{Lemma}
\newtheorem{proposition}[theorem]{Proposition}
\newtheorem{corollary}[theorem]{Corollary}
\theoremstyle{definition}
\newtheorem{remark}[theorem]{Remark}

\usepackage{needspace}

\begin{document}

\title[Cayley graphs of almost simple groups]{Automorphism groups of Cayley graphs on almost simple groups with normal connection sets}

\author[Cao]{Mengyu Cao}
\address{Institute for Mathematical Sciences, Renmin University of China, Beijing 100086, China}
\email{myucao@ruc.edu.cn}

\author[Lv]{Benjian Lv}
\address{Laboratory of Mathematics and Complex Systems (Ministry of Education), School of Mathematical Sciences, Beijing Normal University, Beijing 100875, China}
\email{bjlv@bnu.edu.cn}

\author[Xia]{Binzhou Xia}
\address{School of Mathematics and Statistics\\The University of Melbourne\\Parkville, VIC 3010\\Australia}
\email{binzhoux@unimelb.edu.au}

\begin{abstract}
We determine the full automorphism group of every connected Cayley graph on an almost simple group with a normal connection set. We also characterize exactly when the full automorphism group is generated by right translations, group automorphisms preserving the connection set, and inversion. Our results substantially generalize several results in the literature, including the known determinations of the automorphism groups of derangement graphs, complete transposition graphs and complete alternating group graphs. As a further application, we establish criteria for a finite nonabelian simple group to admit a graphical doubly regular representation and determine exactly which alternating groups do so, correcting claims in the literature.

\textit{Key words:} Cayley graph; normal connection set; almost simple group; automorphism group; graphical doubly regular representation.

\textit{MSC2020:} 05C25, 20B25.
\end{abstract}

\maketitle

\section{Introduction}\label{sec:intro}

For a group $G$ and an inverse-closed subset $S$ of $G\setminus\{1\}$, the \emph{Cayley graph} $\Cay(G,S)$ on $G$ with \emph{connection set} $S$ is defined to be the graph with vertex set $G$ such that vertices $x$ and $y$ are adjacent if and only if $yx^{-1}\in S$. A subset of $G$ is said to be \emph{normal} if it is invariant under conjugation by every element in $G$.

Cayley graphs with normal connection sets have been studied under various names in the literature, including \emph{group graphs}~\cite{Fisk1983}, \emph{conjugacy (class) graphs}~\cite{Ito1984}, \emph{quasiabelian Cayley graphs}~\cite{Wang1997}, \emph{normal Cayley graphs}\footnote{This terminology differs from the notion of normal Cayley graphs introduced by Xu in~\cite{Xu1998}.}~\cite{Larose1998}, \emph{central Cayley graphs}~\cite{PonomarenkoVasilev2018}, and \emph{dual Cayley graphs}~\cite{Pan2020}.
These graphs are particularly amenable to spectral methods, as their eigenvalues can be expressed explicitly in terms of the irreducible character values of the underlying group~\cite{Zieschang1988}.
This representation-theoretic description has proved especially useful in Erd\H{o}s--Ko--Rado-type problems for groups~\cite{AhmadiMeagher2015,EllisFriedgutPilpel2011,LongPlazaSinXiang2018,MeagherSpiga2011,MeagherSpiga2016}.
It is also well known that every connected Cayley graph with a normal connection set is Hamiltonian~\cite{Wang1997}.
For further background on Cayley graphs with normal connection sets, see~\cite{Zgrablic2002}.

Determining the (full) automorphism group of a Cayley graph is a fundamental problem that is difficult in general.
The automorphism groups of several well-known families of Cayley graphs with normal connection sets have been determined, including derangement graphs~\cite{DengZhang2011}, complete transposition graphs~\cite{Ganesan2015}, complete alternating group graphs~\cite{HuangHuang2017}, and more recently, Cayley graphs on alternating and symmetric groups whose connection sets consist of all $k$-cycles~\cite{HuangYang2026}.
Note that these Cayley graphs are all defined on almost simple groups, where a group $G$ is said to be \emph{almost simple} with \emph{socle} $T$ if $\Inn(T) \leq G \leq \Aut(T)$ for some finite nonabelian simple group $T$.
Since $\Inn(T) \cong T$, we usually identify $\Inn(T)$ with $T$ when there is no confusion.
The automorphism groups of Cayley graphs on almost simple groups $G$ such that $G/T$ is abelian were studied by Tian and Li~\cite{TianLi2025}, under certain technical conditions on the normal connection set $S$.

In this paper, we prove a general theorem (Theorem~\ref{thm:main}) that completely determines the full automorphism group of every Cayley graph on an almost simple group with a normal connection set. The main results of~\cite{DengZhang2011,Ganesan2015,HabinezaMwambene2025,HuangHuang2017,HuangYang2026,TianLi2025} follow as straightforward corollaries.
To state the theorem, we introduce some terminology and notation.

For a group $G$, let $\iota_G$ be the bijection on $G$ and $L_G,R_G,\Inn_G$ be the homomorphisms from $G$ to $\Sym(G)$ such that for each $x,y\in G$,
\[
x^{\iota_G}=x^{-1},\ \ x^{L_G(y)}=y^{-1}x,\ \ x^{R_G(y)}=xy,\ \  x^{\Inn_G(y)}=y^{-1}xy.
\]
When the group $G$ is clear from the context, we will abbreviate $\iota_G$, $L_G$, $R_G$ and $\Inn_G$ as $\iota$, $L$, $R$ and $\Inn$, respectively. For a subset $S$ of $G$, denote
\[
\Aut(G,S)=\{\alpha\in\Aut(G)\mid S^\alpha=S\}.
\]
For an almost simple group $G$ with socle $T$ and a subset $S\subseteq G$, a coset $Tg$ is said to be \emph{$S$-partial} if $\varnothing\neq S\cap Tg\subsetneq Tg$.

For a graph $\Gamma$ and a vertex $v$, write $\Gamma(v)$ for the neighbourhood of $v$, and for $U\subseteq V(\Gamma)$, write $\Gamma[U]$ for the subgraph induced by $U$.
For a subgroup $X\leq\Aut(\Gamma)$, the \emph{quotient graph} $\Gamma/X$ is the simple graph whose vertices are the $X$-orbits on $V(\Gamma)$, with two distinct orbits adjacent if some vertex of one is adjacent in $\Gamma$ to some vertex of the other. For $\Gamma=\Cay(G,S)$ and $N\trianglelefteq G$, we abbreviate $\Gamma/R_G(N)$ to $\Gamma/N$, whose vertices are the $N$-cosets in $G$.

\begin{theorem}\label{thm:main}
Let $G$ be an almost simple group with socle $T$, let $\Gamma=\Cay(G,S)$ be connected with a normal connection set $S$, let $\overline{P}$ be the subgroup of $G/T$ generated by all the $S$-partial $T$-cosets, let $P$ be the full preimage of $\overline{P}$ under the natural homomorphism $G\to G/T$, and let
\[
D=\{g\in G\mid Sg\setminus\{1\}=S\setminus\{g\}\},\ \ H=PD,\ \ K=TD.
\]
Then $D$, $H$ and $K$ are normal subgroups of $G$ with $K\leq H$, and the following statements hold:
\begin{enumerate}[label=\textup{(\alph*)},leftmargin=*]
\item\label{item:main-blocks} The $H$-cosets and the $K$-cosets both form $\Aut(\Gamma)$-invariant partitions of $G$.
\item\label{item:main-intersection} With the wreath products preserving the partitions of $H$-cosets and $K$-cosets respectively,
\[
\Aut(\Gamma)=\bigl(\Aut(\Gamma[H])\wr\Sym(G/H)\bigr)\cap\bigl(\Sym(K)\wr\Aut(\Gamma/K)\bigr).
\]
\item\label{item:main-local}
$\displaystyle\Aut(\Gamma[H])=
\begin{cases}
\bigl(R_H(H)\rtimes\Aut(H,S\cap H)\bigr)\rtimes\langle\iota_H\rangle&\text{if }D=1\\
\Sym(H)&\text{if }D\neq1.
\end{cases}
$
\end{enumerate}
\end{theorem}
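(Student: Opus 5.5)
The plan is to use the large symmetry that a normal connection set forces, namely that both $R_G(G)$ and $L_G(G)$ lie in $\Aut(\Gamma)$, together with a twin-class analysis and a classification of overgroups of the diagonal action of $T\times T$.

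\emph{Step 1: the subgroups.} The condition $Sg\setminus\{1\}=S\setminus\{g\}$ says exactly that $\Gamma(1)\setminus\{g\}=\Gamma(g)\setminus\{1\}$, i.e.\ $1$ and $g$ are twins in $\Gamma$. I will check, allowing both adjacent and non-adjacent twins, that the twin relation is an equivalence relation on $V(\Gamma)$ preserved by $\Aut(\Gamma)$. Because $R_G(G)\le\Aut(\Gamma)$, the twin classes are the right cosets of the twin class of $1$, and that class is $D$; so $D$ is a subgroup. Since $S$ is normal, $\Inn_G(G)$ fixes $1$ and preserves $\Gamma$, hence fixes the twin class of $1$, so $D\trianglelefteq G$. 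The set of $S$-partial cosets is invariant under conjugation, so $\overline{P}\trianglelefteq G/T$ and $P\trianglelefteq G$. Consequently $H=PD$ and $K=TD$ are normal, and $K\le H$ because $T\le P$.

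\emph{Step 2: part~\ref{item:main-blocks} and the decomposition~\ref{item:main-intersection}.} For $g\notin P$ the coset $Tg$ is not $S$-partial, so $S\cap Tg\in\{\varnothing,Tg\}$. I will show that edges between distinct $H$-cosets are governed only by these full-or-empty $T$-cosets, together with the twin structure coming from $D$. This makes $\Gamma$ a generalized lexicographic product over $\Gamma/K$ with fibres indexed by $K$-cosets, while the $H$-cosets carry the induced pieces $\Gamma[Hx]\cong\Gamma[H]$. To prove that the $H$-cosets and $K$-cosets are $\Aut(\Gamma)$-blocks, I will characterise them intrinsically, as modules or twin-type classes of suitable derived graphs such as the complement or iterated quotients, so that $\Aut(\Gamma)$ must preserve them. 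The containment $\subseteq$ in~\ref{item:main-intersection} then follows. The reverse containment $\supseteq$ is a direct check: a permutation that acts on each $H$-coset as an automorphism of $\Gamma[H]$ and induces an automorphism of $\Gamma/K$ preserves all edges, because edges between $K$-cosets are complete or empty.

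\emph{Step 3: part~\ref{item:main-local}.} Write $\Gamma[H]=\Cay(H,S\cap H)$.

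If $D\ne1$, then $D$ is a nontrivial normal subgroup of the almost simple group $G$, so $D\ge T$. This forces $S\cap T\in\{\varnothing,T\setminus\{1\}\}$. Twinness of all elements of $D$ further forces every coset $Tg$ with $g\in D$ to be fully in or fully out of $S$, consistently across $D$. I then need to show that no $S$-partial coset occurs outside $D$ in the relevant range, so that all of $H$ is one twin class. Then $\Gamma[H]$ is complete or empty and $\Aut(\Gamma[H])=\Sym(H)$.

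If $D=1$, the group $A=\Aut(\Gamma[H])$ contains $R_H(H)L_H(H)\ge R(T)L(T)$, which acts on $H$ like the holomorph-type diagonal action of $T\times T$. I will invoke the classification of overgroups of this diagonal action (Praeger's results on overgroups of $T\times T$ in diagonal action, or inclusions of quasiprimitive groups). By that classification, $A$ either normalises $R(T)L(T)$, or contains $\Alt(H)$, or preserves a nontrivial block system made of cosets of a normal subgroup of $H$.
\begin{enumerate}[label=(\roman*),leftmargin=*]
\item The case $A\ge\Alt(H)$ would make $\Gamma[H]$ complete or empty, making $D\ne1$; this is excluded.
\item The block-system case is excluded using the defining property of $P$: every $T$-coset generating $\overline{P}$ is partial, so $\Gamma[H]$ cannot be a lexicographic product over a quotient by $T$.
\end{enumerate}
Hence $A\le\Nor_{\Sym(H)}(R(T)L(T))$. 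Computing stabilisers in this normaliser gives $A_1=\Aut(H,S\cap H)\rtimes\langle\iota_H\rangle$, where $\iota_H$ is an automorphism of $\Gamma[H]$ since $S\cap H$ is inverse-closed and normal. This yields the stated formula.

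\emph{Main obstacle.} The hard step is this $D=1$ analysis. The available overgroup classification is stated for primitive or quasiprimitive groups, but here $H$ need not equal $T$, so the action of $R(H)L(H)$ is not necessarily quasiprimitive. I would first handle $H=T$ directly. I would then treat general $H$ by using the partial cosets to show that any block system of $A$ must already be trivial, reducing to the quasiprimitive case.
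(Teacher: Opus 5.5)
\paragraph{Verdict.} The proposal has genuine gaps, in part~(a) when $\overline{P}\neq1$ and in part~(c) when $H>T$. Your Step~1 (twin classes, normality of $D$, $P$, $H$, $K$) and the $\supseteq$ half of Step~2 are fine and match the paper.

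\paragraph{Gap 1: part~(a) when $D=1$ and $\overline{P}\neq1$, so $K=T$.} You propose to recognise the $T$-cosets intrinsically as modules or twin-type classes. They are not modules. If $Tx^{-1}\neq T$ is $S$-partial, then $\Gamma(x)\cap T=(S\cap Tx^{-1})x$ is a nonempty proper subset of $T$. The module/Sabidussi idea only works when $\overline{P}=1$, where $\Gamma\cong(\Gamma/T)\circ\Gamma[T]$.

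For $\overline{P}\neq1$ the block property is genuinely group-theoretic, and the proposal supplies no mechanism for it. The paper argues as follows:
\begin{enumerate}
\item Lemma~\ref{lem:primitivecase} (from Liebeck--Praeger--Saxl) makes $\Aut(\Gamma)$ imprimitive.
\item A minimal block $N\ni1$ is a normal subgroup with $T\le N<G$.
\item If $N>T$, then Lemma~\ref{lem:lpsprimitive} gives $\Alt(N)\le\Aut(\Gamma)_N^N$. Hence every $\ell$-subset of $N$, where $2\le\ell\le|N|-2$ is fixed by a partial coset outside $N$, has the form $\Gamma(y)\cap N$ for some vertex $y$.
\item This forces $\binom{|N|}{\ell}\le|G|\le|T|^2/30$, a contradiction using $|\Out(T)|\le|T|/30$.
\end{enumerate}
Only after this are the $P$-cosets obtained, as components of the graph on $G/T$ joining cosets whose bipartite graph is partial.

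\paragraph{Gap 2: part~(c) when $D=1$ and $H>T$.} Your plan is to rule out every block system of $A=\Aut(\Gamma[H])$ and reduce to the quasiprimitive case. This cannot work. Not being a lexicographic product over $H/T$ does not make $A$ primitive. In fact the target group $(R_H(H)\rtimes\Aut(H,S\cap H))\rtimes\langle\iota_H\rangle$ preserves the $T$-cosets and has the intransitive normal subgroup $L(T)R(T)$. So $A$ is never quasiprimitive when $H>T$, and the imprimitive case must be analysed directly. The missing ingredients are:
\begin{enumerate}
\item[(i)] The $T$-cosets are $A$-blocks; this is Gap~1 applied to $\Gamma[H]$.
\item[(ii)] $A_1$ acts faithfully on $T$. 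The paper gets this from Lemma~\ref{lem:rigidity}: a nonempty proper $T$-conjugation-invariant subset of $Tg$ has trivial right stabiliser in $T$. This propagates ``fixes $Ty$ pointwise'' to ``fixes $Txy$ pointwise'' along $S$-partial cosets $Tx$, and these generate $H/T$.
\item[(iii)] $A_T^T\le\Hol(T)\langle\iota_T\rangle$, by Lemma~\ref{lem:lpsprimitive} together with the same counting argument to exclude $\Alt(T)$.
\item[(iv)] A lifting step. For $f\in A_1$ with $f|_T=\alpha\in\Aut(T)$, faithfulness gives $R(t)f=fR(t^\alpha)$ and $L(t)f=fL(t^\alpha)$. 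Hence $\beta(x^f)=\beta(x)^\alpha$ for the faithful conjugation map $\beta\colon H\to\Aut(T)$, so $f\in\Aut(H,S\cap H)$.
\end{enumerate}

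\paragraph{Smaller point: the case $D\neq1$.} Here you only need to observe that $1$ is adjacent to all or none of each $D$-coset $Dg\neq D$. Then no $T$-coset is $S$-partial, so $\overline{P}=1$ and $H=K=D$.
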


\begin{remark}
The set $D$ in Theorem~\ref{thm:main} has a clear graph-theoretic interpretation: the equality
\[
\Gamma(x)\setminus\{y\}=\Gamma(y)\setminus\{x\}
\]
for vertices $x,y\in G$ defines an equivalence relation whose classes are the $D$-cosets in $G$; in particular, $D$ is the class containing $1$. See the proof of Lemma~\ref{lem:dcosets}. It is also shown in the lemma that either $D=1$, or $\overline{P}=1$ and $D=H=K$.
\end{remark}

\begin{remark}
The graphs $\Gamma[H]$ and $\Gamma/K$ in Theorem~\ref{thm:main} are Cayley graphs of $H$ and $G/K$, respectively. In fact,
\[
\Gamma[H]=\Cay(H,S\cap H)\ \text{ and }\ \Gamma/K=\Cay\bigl(G/K,\{Ks\mid s\in S\setminus K\}\bigr).
\]
\end{remark}

\begin{remark}
Theorem~\ref{thm:main} also readily determines the automorphism group of a possibly disconnected graph $\Gamma=\Cay(G,S)$ with $G$ almost simple and $S$ normal: if $S=\varnothing$, then $\Aut(\Gamma)=\Sym(G)$; otherwise, letting $M=\langle S\rangle$, we have
\[
 \Aut(\Gamma)=\Aut(\Cay(M,S))\wr\Sym(G/M),
\]
where $\Aut(\Cay(M,S))$ is determined in Theorem~\ref{thm:main}, as $M$ is almost simple with socle $T$.
\end{remark}

Combining Theorem~\ref{thm:main} with the classification theorem of Liebeck, Praeger and Saxl~\cite{LPS2010}, which relies on the Classification of Finite Simple Groups, we obtain the following characterizations.

\begin{corollary}\label{cor:criterion}
Let $\Gamma=\Cay(G,S)$ be a connected Cayley graph on an almost simple group $G$ with socle $T$ such that the set of $S$-partial $T$-cosets generates $G/T$. Then the following are equivalent:
\begin{enumerate}[label=\textup{(\alph*)},leftmargin=*]
\item\label{item:as-normal} $S$ is normal in $G$;
\item\label{item:as-iota} $\Aut(\Gamma)$ contains $\iota_G$;
\item\label{item:as-IG} $\Aut(\Gamma)\geq\Inn(G)$;
\item\label{item:as-LG} $\Aut(\Gamma)\geq L(G)$;
\item\label{item:as-LRG} $\Aut(\Gamma)\geq L(G)R(G)\langle\iota_G\rangle$;
\item\label{item:as-canonical} $\Aut(\Gamma)=(R(G)\rtimes\Aut(G,S))\rtimes\langle\iota_G\rangle$ or $\Sym(G)$;
\item\label{item:as-minimal} either $\Aut(\Gamma)$ is primitive on $G$ with $(G,\Soc(\Aut(\Gamma)),\Soc(\Aut(\Gamma))_1)$ not in Table~$\ref{tab:exceptions}$, or $L(T)R(T)$ is a minimal normal subgroup of $\Aut(\Gamma)$.
\end{enumerate}
\end{corollary}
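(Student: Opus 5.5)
The plan is to prove the equivalences through the cycle (a)$\Rightarrow$(f)$\Rightarrow$(e)$\Rightarrow$(b),(c),(d)$\Rightarrow$(a), and to attach (g) to (f) by means of the Liebeck--Praeger--Saxl classification. The hypothesis says the $S$-partial $T$-cosets generate $G/T$, so in Theorem~\ref{thm:main} we have $\overline{P}=G/T$ and hence $P=G$. Because $H=PD$, this gives $H=G$.

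\textbf{Step (a)$\Rightarrow$(f).} Apply Theorem~\ref{thm:main} with $H=G$. Part~\ref{item:main-intersection} then collapses: the wreath product over $\Sym(G/H)$ is just $\Aut(\Gamma[G])=\Aut(\Gamma)$. Part~\ref{item:main-local} gives directly the two alternatives $(R(G)\rtimes\Aut(G,S))\rtimes\langle\iota_G\rangle$ when $D=1$, and $\Sym(G)$ when $D\neq1$.

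\textbf{Steps (f)$\Rightarrow$(e)$\Rightarrow$(b),(c),(d).} For (f)$\Rightarrow$(e), note that $\Inn(G)\le\Aut(G,S)$ whenever $S$ is normal. However, (f) does not mention normality, so I need a separate argument here. If $\Aut(\Gamma)$ is the first group in (f), I argue that $L(g)=R(g^{-1})\Inn(g)$ forces the needed containments when $\Aut(G,S)\ge\Inn(G)$. The cleaner route is to prove (f)$\Rightarrow$(b) directly, since $\iota_G$ lies in both groups, and then prove (b)$\Rightarrow$(a). Trivially (e) implies (b), (c) and (d).

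\textbf{Steps (b),(c),(d)$\Rightarrow$(a).} The key identities are the following.
\begin{itemize}
\item $\iota R(g)\iota=L(g)$, so (b) gives $L(G)\le\Aut(\Gamma)$.
\item $L(g)R(g)=\Inn(g)$.
\item An inner automorphism fixing $1$ preserves $\Gamma(1)=S$, so $S^{\Inn(g)}=S$.
\end{itemize}
Hence each of (b), (c) and (d) yields (c), and (c) gives that $S$ is normal. Conversely, if $S$ is normal, then $\Inn(G)\le\Aut(G,S)$, so the first group in (f) contains $L(G)R(G)\langle\iota\rangle$, and so does $\Sym(G)$. This closes the loop through (e).

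\textbf{Step (g).} For (a)$\Rightarrow$(g), I split according to (f).
\begin{itemize}
\item If $\Aut(\Gamma)=\Sym(G)$, it is primitive with socle $\Alt(G)$. I must check that this triple is not in Table~\ref{tab:exceptions}, which presumably lists only exceptional primitive groups containing a regular almost simple subgroup.
\item Otherwise $\Aut(\Gamma)=R(G)\Aut(G,S)\langle\iota\rangle$ normalizes $R(T)$ and $L(T)$, and $L(T)R(T)\cong T^2$. I show it is minimal normal because $\iota$ swaps the two factors and no proper nontrivial normal subgroup of $T^2$ is invariant under that swap. In this case $\Aut(\Gamma)$ may also be primitive, of diagonal type, and that is consistent with (g).
\end{itemize}
For (g)$\Rightarrow$(c): if $L(T)R(T)$ is normal in $\Aut(\Gamma)$, then $\Aut(\Gamma)$ normalizes $R(T)L(T)$. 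I still need to obtain $\Inn(G)$ or $\iota$; I expect to argue that $L(T)\le\Aut(\Gamma)$ and then use the hypothesis on partial cosets to extend to $L(G)$. In the primitive case, I invoke \cite{LPS2010}: primitive groups containing the regular subgroup $R(G)$ are classified, and outside the exceptions of Table~\ref{tab:exceptions} the socle is either $\Alt(G)$ or contains $L(T)R(T)$ as a diagonal-type socle. In both cases the group contains $\iota$ or $\Inn(G)$.

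The main obstacle is this last direction: matching the LPS list of primitive overgroups of a regular almost simple group against the table, and upgrading $L(T)\le\Aut(\Gamma)$ to $L(G)$, which may require the partial-coset hypothesis again.
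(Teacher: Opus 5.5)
Your handling of (a)--(f) is correct and essentially the paper's: $P=G$ forces $H=G$, Theorem~\ref{thm:main}\ref{item:main-local} yields (f), and the identities of Lemma~\ref{lem:normalset} close the loop. Your (f)$\Rightarrow$(g) also matches, modulo a slip: $R(G)\Aut(G,S)\langle\iota\rangle$ normalizes $L(T)R(T)$, but not each factor separately.

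The genuine gap is (g)$\Rightarrow$(a) when $M=L(T)R(T)$ is minimal normal in $A=\Aut(\Gamma)$. There $L(T)\le A$ is automatic, and the partial-coset hypothesis cannot upgrade it to $L(G)$. Take $G=\Sy_5$ and $S=(1,2,3,4,5)^{\A_5}\cup(1,2)^{\Sy_5}$. Then $S$ is inverse-closed, $\A_5$-invariant and generates $G$, and both $T$ and $T(1,2)$ are $S$-partial. Also $\Inn(\A_5)\le\Aut(G,S)$ gives $L(T)\le A$. Yet $S$ is not normal, so $L(G)\not\le A$ by Lemma~\ref{lem:normalset}.

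You also weakened ``minimal normal'' to ``normal'', which discards exactly what is needed: minimality forces some $a\in A$ to interchange $L(T)$ and $R(T)$. The paper then sets $N=\Nor_{\Sym(G)}(M)$ and argues as follows.
\begin{enumerate}
\item $\Cen_{\Sym(G)}(M)=1$, because each $M_g$ fixes only $g$.
\item An element $f\in\Cen_N(L(T))$ fixing $1$ fixes $T$ pointwise and normalizes $R(T)$. Since $R(t)^f$ and $R(t)$ both lie in $R(T)$ and send $1$ to $t$, $f$ centralizes $M$, so $f=1$.
\item The Frattini argument with the transitive subgroup $R(G)$ gives $\Cen_N(L(T))=R(G)$, and likewise $\Cen_N(R(T))=L(G)$.
\end{enumerate}
Hence $L(G)=\Cen_N(R(T))=\Cen_N(L(T))^a=R(G)^a\le A$, which is (d). The partial-coset hypothesis plays no role in this direction.

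The primitive case of (g)$\Rightarrow$(a) is also incomplete. Liebeck--Praeger--Saxl determine $A$ only up to permutational isomorphism. Outside Table~\ref{tab:exceptions} one gets either $A\ge\Alt(G)$, or $G=T$ and $M\le A^\sigma\le\Hol(T)\langle\iota_T\rangle$ for some unknown $\sigma\in\Sym(T)$. So you may not assert that the socle is $L(T)R(T)$ itself.

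The missing step is Lemma~\ref{lem:regularsimple}. The group $R(T)^\sigma$ is a regular subgroup of $\Hol(T)\langle\iota_T\rangle$ isomorphic to $T$, hence equals $L(T)$ or $R(T)$; this rests on Rowley's theorem that a group with a fixed-point-free automorphism is solvable. So $\sigma$ or $\sigma\iota_T$ normalizes $R(T)$, whence $\sigma\in\Hol(T)\langle\iota_T\rangle$ normalizes $M$ and $M\le A$.

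In the $\Alt(G)$ case, note that $\Alt(G)$ need not contain $\iota$. The correct conclusion is that $2$-transitivity makes $\Gamma$ complete, so $S=G\setminus\{1\}$ is normal.
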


\begin{table}[htbp]
\centering
\caption{$(G,X,X_1)$, where $X=\Soc(\Aut(\Gamma))$, adapted from~\cite[Table~2]{LPS2010}}\label{tab:exceptions}
\small
\begin{tabular}{|l|l|l|l|}
\hline
$G$ & $X$ & $X_1$ & Remark\\
\hline
$\A_5$ & $\PSL_2(59)$ & $\C_{59}\rtimes\C_{29}$ & \\
$\A_7$ & $\A_{11}$ & $\M_{11}$ & \\
$\A_7$ & $\A_{12}$ & $\M_{12}$ & \\
$\A_{p^2-2}$ & $\A_{p^2+1}$ & $\PSL_2(p^2).\C_2$ & $p\equiv3\pmod4$ prime\\
\hline
$\Sy_{p-2}$ & $\A_p$ & $\C_p\rtimes\C_{(p-1)/2}$ & $p\geq7$ prime\\
$\Sy_{p-2}$ & $\A_{p+1}$ & $\PSL_2(p)$ & $p\geq7$ prime\\
\hline
$\Sy_5$ & $\A_9$ & $\PSL_2(8).\C_3$ & \\
$\Sy_5$ & $\mathrm{Sp}_4(4)$ & $\PSL_2(16).\C_2$ & $\Aut(\Gamma)=X.\C_2$\\
$\Sy_5$ & $\mathrm{Sp}_6(2)$ & $\mathrm{G}_2(2)$ & \\
$\Sy_5$ & $\Omega_8^+(2)$ & $\Omega_7(2)$ & \\
$\Sy_5$ & $\mathrm{Sp}_8(2)$ & $\mathrm{O}_8^-(2)$ & \\
\hline
$\PSL_2(16).\C_4$ & $\mathrm{Sp}_6(4)$ & $\mathrm{G}_2(4)$ & $\Aut(\Gamma)\geq X.\C_2$\\
$\PSL_2(16).\C_4$ & $\Omega_8^+(4)$ & $\Omega_7(4)$ & $\Aut(\Gamma)\geq X.\C_2$\\
\hline
$\PSL_3(4).\C_2$ & $\M_{23}$ & $\C_{23}\rtimes\C_{11}$ & \\
$\PSL_3(4).\C_2$ & $\M_{24}$ & $\PSL_2(23)$ & \\
\hline
\end{tabular}
\smallskip
\begin{minipage}{\textwidth}
\end{minipage}
\end{table}

Note that $\Aut(G,S)\geq\Inn(G)$ for each normal subset $S$ of $G$.
Also observe that, for a Cayley graph $\Gamma=\Cay(G,S)$, the connection set $S$ is normal in $G$ if and only if $\Aut(\Gamma)\geq L(G)R(G)\langle\iota\rangle$ (see Lemma~\ref{lem:normalset}). Moreover, if $G$ is nonabelian with $S$ normal, then
\begin{equation}\label{eq:canonical}
\Aut(\Gamma)\geq(R(G)\rtimes\Aut(G,S))\rtimes\langle\iota\rangle
\end{equation}
(see Lemma~\ref{lem:canonicallower}).
The next corollary, from which the results in~\cite{DengZhang2011,Ganesan2015,HuangHuang2017,HuangYang2026,TianLi2025} follow more directly than from Theorem~\ref{thm:main}, characterizes the equality in~\eqref{eq:canonical}.

\begin{corollary}\label{cor:canonicalcriterion}
Let $G$ be an almost simple group, and let $\Gamma=\Cay(G,S)$ be connected with a normal connection set $S$. Then in the notation of Theorem~$\ref{thm:main}$, the following are equivalent:
\begin{enumerate}[label=\textup{(\alph*)},leftmargin=*]
\item\label{item:canonical-equality} $\Aut(\Gamma)=(R(G)\rtimes\Aut(G,S))\rtimes\langle\iota\rangle$;
\item\label{item:canonical-generation} $\Gamma$ is not complete and $P=G$;
\item\label{item:canonical-local} $D=1$ and $H=G$.
\end{enumerate}
\end{corollary}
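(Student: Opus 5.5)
We derive Corollary~\ref{cor:canonicalcriterion} from Theorem~\ref{thm:main} together with the lower bound~\eqref{eq:canonical} (Lemma~\ref{lem:canonicallower}) and the fact from Lemma~\ref{lem:dcosets} that either $D=1$, or $\overline{P}=1$ and $D=H=K$. We show (b)$\Leftrightarrow$(c), (c)$\Rightarrow$(a) and (a)$\Rightarrow$(c).

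For (b)$\Leftrightarrow$(c), first note that $D=G$ exactly when $Sg\setminus\{1\}=S\setminus\{g\}$ for every $g$. Taking $g\in S$, connectedness forces $S=G\setminus\{1\}$, so $D=G$ if and only if $\Gamma$ is complete. Suppose (c) holds. Then $H=P=G$, and $\Gamma$ is not complete since $D=1\neq G$. Conversely, suppose (b) holds, so $P=G$ and $\Gamma$ is not complete. If $D\neq1$, Lemma~\ref{lem:dcosets} gives $\overline{P}=1$, so $P=T\neq G$ unless $G=T$. In that case $D=H=K=P=G$, which makes $\Gamma$ complete, a contradiction. Hence $D=1$ and $H=PD=G$.

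For (c)$\Rightarrow$(a), if $H=G$ then the $H$-cosets give the trivial one-block partition. Thus $\Aut(\Gamma[H])\wr\Sym(G/H)=\Aut(\Gamma)$, and the intersection in Theorem~\ref{thm:main}\ref{item:main-intersection} becomes $\Aut(\Gamma[G])\cap(\Sym(K)\wr\Aut(\Gamma/K))$. Since $\Gamma[G]=\Gamma$, this is trivially consistent. The substantive input is part~\ref{item:main-local}: with $D=1$ and $H=G$ it gives $\Aut(\Gamma)=\Aut(\Gamma[G])=(R_G(G)\rtimes\Aut(G,S))\rtimes\langle\iota_G\rangle$, which is (a).

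For (a)$\Rightarrow$(c), argue by contraposition and assume (c) fails. If $D\neq1$, then $H=D$ and $\Aut(\Gamma[H])=\Sym(H)$ by~\ref{item:main-local}. By~\ref{item:main-intersection}, $\Aut(\Gamma)$ contains $\Sym(D)\times1\times\dots\times1$, acting on the coset $D$ and fixing everything else. This lies in both wreath products: the $K$-blocks are the $D$-cosets and the induced action on $\Gamma/K$ is trivial. Its order $|D|!$ exceeds the stabiliser bound $|\Aut(G,S)|\cdot2$ coming from (a)? This is not immediate, so instead we argue structurally. An element of the group in (a) fixing $1$ lies in $\Aut(G,S)\langle\iota\rangle$, and such an element fixes every vertex of the form $1$ only; but a transposition of two elements of $D\setminus\{1\}$, when $|D|\ge3$, fixes $1$ and is not a group automorphism composed with $\iota$ (it moves only two points, whereas a nontrivial automorphism of an almost simple group moves many). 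When $|D|=2$, $D$ is a normal subgroup of order $2$ in an almost simple group, which is impossible. If $D=1$ but $H\neq G$, then $\Aut(\Gamma)\geq\Aut(\Gamma[H])\wr\Sym(G/H)\cap(\dots)$ contains the elements acting as $R_H(h)$ on the single coset $H$ and as the identity elsewhere. We must check that such an element preserves the $K$-partition and induces an automorphism of $\Gamma/K$: the $K$-cosets inside $H$ are permuted via right multiplication in $H/K$, and we need this to be an automorphism of $\Gamma/K$, i.e. that edges between $H$ and $G\setminus H$ are preserved. Here we use that $S\setminus P$ is a union of full $T$-cosets, so adjacency between a vertex of $H$ and one outside $H$ depends only on $T$-cosets. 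Choosing $h\in T$, which is possible since $T\leq K\leq H$, the element fixes every $K$-coset, so it lies in $\Aut(\Gamma)$. It fixes all of $G\setminus H$ but not $H$ pointwise, while no nonidentity element of the group in (a) fixes a whole coset pointwise (for instance it would then centralise the relevant group), so (a) fails.

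The main obstacle is this last step: verifying that the localized translations lie in the intersection of Theorem~\ref{thm:main}\ref{item:main-intersection}, and ruling out that they coincide with elements of $(R(G)\rtimes\Aut(G,S))\rtimes\langle\iota\rangle$. I would try the cleanest route first, namely comparing pointwise stabilisers of the nonempty set $G\setminus H$.
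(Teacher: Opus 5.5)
Your arguments for (b)$\Leftrightarrow$(c) and (c)$\Rightarrow$(a) are fine. They use Lemma~\ref{lem:dcosets} and Theorem~\ref{thm:main}\ref{item:main-local} with $H=G$ in essentially the way the paper does; the paper only routes (b)$\Rightarrow$(a) through Corollary~\ref{cor:criterion}.

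The gap is in (a)$\Rightarrow$(c), at the step that carries its content. In both cases you correctly produce an element of $\Aut(\Gamma)$ via Theorem~\ref{thm:main}\ref{item:main-intersection}, but you never prove it lies outside $A_0=(R(G)\rtimes\Aut(G,S))\rtimes\langle\iota\rangle$.

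\emph{The transposition.} Your reason (``a nontrivial automorphism moves many points'') only excludes $\Aut(G,S)$, not the coset $\Aut(G,S)\iota$. This is easily repaired. Take $a,b\in T\setminus\{1\}\subseteq D\setminus\{1\}$ of distinct prime orders. Then $(a,b)$ fixes $1$ but does not preserve element orders, whereas every element of $\Aut(G,S)\langle\iota\rangle$ does.

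\emph{The localized translation.} Let $\rho$ act as $R(h)$ on $H$, with $h\in T\setminus\{1\}$, and trivially on $G\setminus H$. Your claim that no nonidentity element of $A_0$ fixes a coset pointwise is supported only by ``it would centralise the relevant group''. You also leave it open yourself. The claim is true, but it needs an argument, for example the following. If $\rho\in A_0$, then $\tau=\rho R(h)^{-1}\in A_0$ fixes $1$, so $\tau\in\Aut(G,S)\langle\iota\rangle$. Moreover, $\tau$ is the identity on $H\supseteq T$ and acts as $R(h^{-1})\neq1$ on $G\setminus H$. Now $\tau=\alpha\iota$ would make $\alpha$ invert every element of the nonabelian group $T$, which is impossible. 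If instead $\tau=\alpha\in\Aut(G)$ with $\alpha|_T=1$, then $t^g=(t^g)^\alpha=t^{g^\alpha}$ for all $t\in T$ and $g\in G$. Hence $g^\alpha g^{-1}\in\Cen_G(T)=1$, so $\alpha=1$, a contradiction.

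The paper avoids both detours. It localizes \emph{conjugation} by $t\in T\setminus\{1\}$ on $H$ rather than right translation, so the resulting $\eta\in\Aut(\Gamma)$ fixes $1$ and lies in $\Aut(G,S)\langle\iota\rangle$ at once. It is not in the $\iota$-coset, because its restriction to the nonabelian group $H$ is an automorphism. An automorphism fixing $G\setminus H$ pointwise also fixes each $h=(hx)x^{-1}$ with $x\notin H$, so $\eta$ would be trivial on $H$, a contradiction. This disposes of $H<G$ regardless of $D$. The remaining case $H=G$, $D\neq1$ forces $D=G$, so $\Gamma$ is complete and $\Aut(\Gamma)=\Sym(G)\neq A_0$. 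No separate transposition argument is needed.
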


\begin{remark}
The implication
\ref{item:canonical-generation}$\Rightarrow$
\ref{item:canonical-equality} contains the main result of Tian and Li~\cite[Theorem~1.2]{TianLi2025} as a special case. They assume additionally that $G/T$ is abelian and that $G$ is generated by the intersections of $S$ with certain $S$-partial $T$-cosets. The latter assumption implies $P=G$, while their definition of a $C$-subset ensures that $\Gamma$ is not complete. Their proof contains a gap, which is discussed in Section~\ref{sec:concluding}.
\end{remark}

Introduced by Zgrabli\'{c}~\cite{Zgrablic2002} in 2002, a \emph{graphical doubly regular representation} (\emph{GDRR}) of a group $G$ is a Cayley graph $\Gamma=\Cay(G,S)$ such that
\[
\Aut(\Gamma)=L(G)R(G)\langle\iota\rangle.
\]
A graph is called \emph{arc-transitive} if its automorphism group acts transitively on the set of ordered pairs of adjacent vertices. Pan~\cite{Pan2020}, who used the term \emph{DGRSA}, also considered the existence of GDRRs for finite groups, both in general and under the additional requirement of arc-transitivity.
As a further application of Theorem~\ref{thm:main}, we establish criteria in Theorem~\ref{thm:regularsubset} for determining when a finite nonabelian simple group admits a GDRR. Using these criteria, we show that every such group with outer automorphism group of odd order admits a GDRR (Corollary~\ref{thm:oddout}), and correct the false statement in~\cite{Zgrablic2002} that the alternating group $\A_n$ admits no GDRR for $n\geq4$, as well as the false assertion in~\cite[Theorem~3.3]{Pan2020} that every $\A_n$ with $n\geq5$ admits an arc-transitive GDRR.

\begin{proposition}[{Corrections of~\cite[Corollary~5.5]{Zgrablic2002} and~\cite[Theorem~3.3]{Pan2020}}]\label{prop:altgdrr}
For $n\geq4$, the following statements hold:
\begin{enumerate}[label=\textup{(\alph*)},leftmargin=*]
\item\label{item:alt-gdrr} $\A_n$ admits a GDRR if and only if $n\notin\{4,7,8,12\}$.
\item\label{item:alt-arc-gdrr} $\A_n$ admits an arc-transitive GDRR if and only if $n\notin\{4,6,7,8,12\}$.
\end{enumerate}
\end{proposition}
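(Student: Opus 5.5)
We reduce everything to Corollary~\ref{cor:canonicalcriterion} applied with $G=T=\A_n$ ($n\geq5$; the case $n=4$ is separate since $\A_4$ is not almost simple). If $S$ is normal in $T$ and $\Gamma=\Cay(T,S)$ is connected, then $P=T$ automatically. Corollary~\ref{cor:canonicalcriterion} therefore gives $\Aut(\Gamma)=(R(T)\rtimes\Aut(T,S))\rtimes\langle\iota\rangle$ whenever $\Gamma$ is not complete. Because $L(T)R(T)=R(T)\rtimes\Inn(T)$, the graph is a GDRR exactly when $\Aut(T,S)=\Inn(T)$. For $n\neq6$ this means $S$ is not invariant under conjugation by a transposition. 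For $n=6$ it means $S$ is invariant under no outer automorphism of $\A_6$. Every GDRR has a normal connection set (Lemma~\ref{lem:normalset}), so $\A_n$ has a GDRR if and only if it has a nonempty, inverse-closed, generating union $S\neq T\setminus\{1\}$ of conjugacy classes with $\Aut(T,S)=\Inn(T)$. I expect this to be essentially Theorem~\ref{thm:regularsubset}. For arc-transitivity we also need $\Aut(\Gamma)_1=\Aut(T,S)\langle\iota\rangle$ to be transitive on $S$. This happens precisely when $S$ is a single $T$-class, or a class together with its inverse class.

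For the constructions, the key fact is that the only $\A_n$-classes not stable under $\Sy_n$ are the split classes. These are the $\Sy_n$-classes of cycle type with distinct odd parts, which split into two $\A_n$-classes $C^{\pm}$ swapped by transpositions. So $S$ must contain exactly one of $C^+,C^-$ for at least one such split pair. Inverse-closure is automatic when $C^+$ is real, and real split classes are those where $\sum(\lambda_i-1)/2$ is even. For (b) we therefore need a real split class $C$ that generates $\A_n$, with $C$ not stable under $\Out$ when $n=6$. Generation is automatic because $\A_n$ is simple and $C$ is nontrivial. We also need $C\neq T\setminus\{1\}$, which is clear.

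\emph{Partitions into distinct odd parts.} Their existence can be checked directly: for $n\geq5$ these exist with $\sum(\lambda_i-1)/2$ even except for small $n$. Examples are $n=9=9$ (sum $4$) and $n=13=13$ (sum $6$). In general one writes $n\equiv1\pmod4$ as a single cycle, or uses two parts such as $(n-k,k)$ with $k$ odd chosen to fix the parity. I would enumerate the small $n$ explicitly to find exactly which $n$ fail. The claim to verify is that a real split class exists if and only if $n\notin\{4,6,7,8,12\}$, with $n=6$ failing for the separate reason that the split $5$-cycle classes are fused by an outer automorphism; everything in (b) hinges on this verification. For instance, $7$ has partitions $7$ (sum $3$, odd) and $5{+}1{+}1$ (not distinct), and $1{+}\dots$ variants also fail. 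Since fixed points give parts equal to $1$, at most one $1$ is allowed.

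\emph{Non-real split classes, for (a).} When no real split class exists, one can still take $S=C^+\cup(C^+)^{-1}$ with $C$ non-real, so that $(C^+)^{-1}=C^-$. But then $S$ is $\Sy_n$-stable, and this gives nothing. Hence the alternative for (a) is to mix two split pairs: take $S=C_1^+\cup C_1^-\cup\dots$, or better, choose $S=C_1^{+}\cup (C_1^{+})^{-1}\cup C_2^{+}\cup(C_2^{+})^{-1}$ where the two pairs are non-real. Such an $S$ is inverse-closed, and a transposition swaps both pairs. The subtlety is to pick the labelling so that the transposition image, $C_1^-\cup C_1^+\cup\dots$, is actually different from $S$. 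This fails, so for a nonreal class the pair always appears as a whole. I would therefore conclude: for $n\neq6$, a GDRR exists if and only if a real split class exists. For $n=6$ the outer automorphisms are richer, and the real class of type $3{+}1{+}1$ is not split, but the class of $3$-cycles is fused with that of double $3$-cycles only under the exceptional outer automorphism. Hence $S$ equal to the set of $3$-cycles has $\Aut(T,S)=\Sy_6$, which is not $\Inn$. So $n=6$ needs a union $S$ stable under $\Sy_6$ but not under $\PGL_2(9)$ or $\M_{10}$, found by checking the eight classes of $\A_6$. This shows that $\A_6$ has a GDRR but no arc-transitive one.

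The main obstacle is the combinatorial classification of $n$ for which a real split class exists, together with the $n=6$ and $n=4$ cases. For $n=4$, $\A_4$ lies outside the theorem, and a direct argument shows no Cayley graph has $\Aut=L R\langle\iota\rangle$ of order $72$. The remaining $n$ are handled uniformly by exhibiting one- or two-part partitions.
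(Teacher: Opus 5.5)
Your reduction for $n\neq6$ is correct and is essentially the paper's: a GDRR exists iff some inverse-closed normal $S$ is not $\Sy_n$-stable; only split classes are moved by a transposition; and a non-real split pair lies wholly inside or wholly outside any inverse-closed $S$. So (a) and (b) both reduce to the existence of a real split class, and your reality criterion ($\sum(\lambda_i-1)/2$ even) is right.

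The gap is that the step you call decisive is never carried out, and the method you sketch for it fails. Count a possible fixed point as a part, and let $r$ be the number of parts. Then $\sum(\lambda_i-1)/2=(n-r)/2$, so the parity depends only on $r$. Varying $k$ in $(n-k,k)$ changes nothing, and reality forces $r\equiv n\pmod 4$. Hence $n\equiv3\pmod4$ needs at least three distinct odd parts, with sum at least $1+3+5=9$. Likewise $n\equiv0\pmod4$ needs at least four, with sum at least $1+3+5+7=16$. In particular, for $n\equiv0\pmod4$ with $n\geq16$, no one- or two-part partition works, contrary to your last sentence. This count is both the existence proof and the missing nonexistence proof for $n\in\{4,7,8,12\}$. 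For existence the paper uses the cycle types $(n)$, $(n-1,1)$, $(n-4,3,1)$ and $(n-9,5,3,1)$. For $n=4$ you give no argument (and $|L(T)R(T)\langle\iota\rangle|=288$, not $72$). None is needed beyond the count: the lower bound~\eqref{eq:canonical} from Lemma~\ref{lem:canonicallower} holds for every nonabelian group, so nonexistence never requires Corollary~\ref{cor:canonicalcriterion}.

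The case $n=6$ contains a genuine error. You need $\Aut(T,S)=\Inn(T)$, that is, $S$ must be fixed by \emph{none} of the three nontrivial elements of $\Out(\A_6)\cong\C_2^2$ (those induced by $\Sy_6$, $\PGL_2(9)$ and $\M_{10}$). A set ``stable under $\Sy_6$'' has $\Aut(T,S)\geq\Sy_6$ and is never a GDRR connection set. Since you exhibit no $S$, existence for $\A_6$ is unproved. A correct choice is the paper's $S=(1,2,3)^T\cup(1,2,3,4,5)^T$: the stabilizer of the $3$-cycle class in $\Out(\A_6)$ is generated by the $\Sy_6$-automorphism, which swaps the two $5$-cycle classes.

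Your explanation of why (b) fails at $n=6$ is also wrong. The $5$-cycle classes do form a real split class, so your claimed equivalence with $n\notin\{4,6,7,8,12\}$ is false at $6$, and their fusion under some outer automorphisms is harmless. The actual obstruction is that automorphisms preserve element orders and $\A_6$ has at most two classes of each order. So every $\Out(\A_6)$-orbit on $\Omega_T$ has length at most $2<|\Out(\A_6)|$, and every member $C\cup C^{-1}$ has nontrivial stabilizer; for instance, each $5$-cycle class is fixed by the $\PGL_2(9)$-automorphism. Theorem~\ref{thm:regularsubset}\ref{item:gdrr-arc} then gives nonexistence.
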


After collecting the preliminary results in Section~\ref{sec:preliminaries}, we prove Theorem~\ref{thm:main} in Section~\ref{sec:automorphisms}. Section~\ref{sec:applications} proves Corollaries~\ref{cor:criterion} and~\ref{cor:canonicalcriterion}, develops criteria for the existence of GDRRs, and applies them to prove Proposition~\ref{prop:altgdrr}. Section~\ref{sec:concluding} then concludes the paper with some comments.

\section{Preliminaries}\label{sec:preliminaries}

We collect preliminary results on Cayley graphs, primitive permutation groups and wreath products.
The first two lemmas apply to arbitrary groups.

\begin{lemma}\label{lem:normalset}
Let $\Gamma=\Cay(G,S)$ be a Cayley graph. Then the following are equivalent:
\begin{enumerate}[label=\textup{(\alph*)},leftmargin=*]
\item\label{item:ns-normal} $S$ is normal in $G$;
\item\label{item:ns-iota} $\Aut(\Gamma)$ contains $\iota$;
\item\label{item:ns-IG} $\Aut(\Gamma)\geq \Inn(G)$;
\item\label{item:ns-LG} $\Aut(\Gamma)\geq L(G)$;
\item\label{item:ns-LGRG} $\Aut(\Gamma)\geq L(G)R(G)\langle\iota\rangle$.
\end{enumerate}
\end{lemma}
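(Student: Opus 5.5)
The plan is to prove the cycle of implications \ref{item:ns-normal}$\Rightarrow$\ref{item:ns-LGRG}$\Rightarrow$\ref{item:ns-iota}, \ref{item:ns-IG}, \ref{item:ns-LG}, and then to show that each of \ref{item:ns-iota}, \ref{item:ns-IG}, \ref{item:ns-LG} gives back \ref{item:ns-normal}. Throughout we use the standard facts that $R(G)\leq\Aut(\Gamma)$ always, and that adjacency is $x\sim y\iff yx^{-1}\in S$.

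First, assume \ref{item:ns-normal}. For $L(g)$, the images of $x,y$ are $g^{-1}x$ and $g^{-1}y$. Their quotient is
\[
(g^{-1}y)(g^{-1}x)^{-1}=g^{-1}(yx^{-1})g,
\]
which lies in $S$ if and only if $yx^{-1}\in S$, by normality. For $\iota$, the images of $x,y$ are $x^{-1}$ and $y^{-1}$, with quotient $y^{-1}x=y^{-1}(xy^{-1})y$. This is a conjugate of $(yx^{-1})^{-1}$, so it lies in $S$ exactly when $yx^{-1}\in S$; here we use both normality and inverse-closure. Together with $R(G)$, this gives \ref{item:ns-LGRG}. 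The implications \ref{item:ns-LGRG}$\Rightarrow$\ref{item:ns-iota} and \ref{item:ns-LGRG}$\Rightarrow$\ref{item:ns-LG} are immediate. Since $\Inn(y)=L(y)R(y)$, we also get \ref{item:ns-LGRG}$\Rightarrow$\ref{item:ns-IG}.

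For the converses, note that $\Gamma(1)=S$. Any automorphism fixing $1$ therefore maps $S$ onto $S$.
\begin{itemize}
\item If $\Inn(G)\leq\Aut(\Gamma)$, then each $\Inn(g)$ fixes $1$ and hence preserves $S$; this is exactly normality of $S$.
\item If $L(G)\leq\Aut(\Gamma)$, then $L(g)R(g)=\Inn(g)\in\Aut(\Gamma)$, which reduces to the previous case.
\item If $\iota\in\Aut(\Gamma)$, take $s\in S$ and $g\in G$. The vertices $g$ and $sg$ are adjacent, so their images $g^{-1}$ and $g^{-1}s^{-1}$ are adjacent. Hence $g^{-1}s^{-1}g\in S$, and using inverse-closure, $g^{-1}sg\in S$. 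So $S$ is normal.
\end{itemize}

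No step is a real obstacle. The only point that needs care is the $\iota$ computation, where the quotient must be rewritten as a conjugate of an element of $S^{-1}=S$, with attention to the right-multiplication convention of this paper.
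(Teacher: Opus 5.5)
Your proof is correct and is essentially the paper's argument: both rest on the same computations, namely that $\iota$ preserves adjacency when $S$ is normal and inverse-closed, that an automorphism fixing $1$ preserves $\Gamma(1)=S$, and that $\Inn(g)=L(g)R(g)$. The difference is only in how the cycle of implications is closed. The paper obtains (b)$\Rightarrow$(e) from $R(G)^\iota=L(G)$, so it needs a single adjacency check, for $\iota$. You instead verify $L(g)$ directly and prove (b)$\Rightarrow$(a) by a separate neighbourhood computation.
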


\begin{proof}
It is clear that~\ref{item:ns-normal}$\Leftrightarrow$\ref{item:ns-IG},~\ref{item:ns-LGRG}$\Rightarrow$\ref{item:ns-LG}, and~\ref{item:ns-LGRG}$\Rightarrow$\ref{item:ns-iota}. If $\iota\in\Aut(\Gamma)$, then
\[
\Aut(\Gamma)\geq\langle R(G),\iota\rangle=\langle R(G),R(G)^\iota,\iota\rangle=\langle R(G),L(G),\iota\rangle=L(G)R(G)\langle\iota\rangle.
\]
Thus,~\ref{item:ns-iota}$\Leftrightarrow$\ref{item:ns-LGRG}. Since $\Aut(\Gamma)$ always contains $R(G)$, we have
\[
~\ref{item:ns-IG}\ \,\Leftrightarrow\,\ \Aut(\Gamma)\geq L(G)R(G)\ \,\Leftrightarrow\,\ \ref{item:ns-LG}.
\]
Then it suffices to prove~\ref{item:ns-normal}$\Rightarrow$\ref{item:ns-iota}. Suppose that $S$ is normal in $G$. Then for $x,y\in G$,
\[
yx^{-1}\in S\ \,\Leftrightarrow\,\ xy^{-1}\in S\ \,\Leftrightarrow\,\ y^{-1}(xy^{-1})
y\in S^y\ \,\Leftrightarrow\,\ y^{-1}x\in S\ \,\Leftrightarrow\,\ y^\iota(x^\iota)^{-1}\in S.
\]
This shows that $\iota$ is an automorphism of $\Gamma$, as~\ref{item:ns-iota} states. The proof is thus complete.
\end{proof}

\begin{lemma}\label{lem:blocknormal}
Let $\Gamma=\Cay(G,S)$ be a Cayley graph, and let $X$ be a subgroup of $\Aut(\Gamma)$ containing both $R(G)$ and $\Inn(G)$. Then each block of imprimitivity for $X$ that contains $1$ is a normal subgroup of $G$.
\end{lemma}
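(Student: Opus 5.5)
Let $B$ be a block of imprimitivity for $X$ with $1\in B$. Since $R(G)\leq X$ is transitive, the blocks of $X$ form an $X$-invariant partition of $G$. We will show that $B$ is a subgroup of $G$ and that it is closed under conjugation. Both facts come from the way $R(G)$ and $\Inn(G)$ fix or move the point $1$.

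First, $B$ is a subgroup. For $b\in B$, the element $R(b)\in X$ sends $1$ to $b\in B$. Since $B$ is a block, $B^{R(b)}\cap B\neq\varnothing$ forces $B^{R(b)}=B$, that is, $Bb=B$. Hence $B$ is closed under right multiplication by its own elements. As $1\in B$, we get $b^k=1\cdot b^k\in B$ for every $k$, so $b^{-1}\in B$ since $G$ is finite. Thus $B$ is a subgroup of $G$. In general, the blocks of $R(G)$ containing $1$ are exactly the subgroups $M$, and the blocks are the right cosets $Mg$.

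Second, $B$ is normal. For $g\in G$, the element $\Inn(g)\in X$ fixes $1$, because $g^{-1}1g=1$. So $B^{\Inn(g)}$ is a block containing $1$, and hence equals $B$. This gives $g^{-1}Bg=B$ for all $g\in G$, so $B\trianglelefteq G$.

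There is no real obstacle here. The only point needing care is the standard fact that a block meeting its image must equal it, applied to $R(b)$ and to $\Inn(g)$. Note that the hypothesis that $X\leq\Aut(\Gamma)$ is not actually needed beyond $X$ being a permutation group on $G$.
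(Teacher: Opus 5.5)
Your proof is correct and is essentially the paper's argument: $R(b)$ maps $1$ into $B\cap B^{R(b)}$, which gives $Bb=B$, and $\Inn(g)$ fixes $1$, which gives $g^{-1}Bg=B$. One small simplification is that finiteness is not needed to get inverses, since $1\in B=Bb$ already gives $b^{-1}\in B$.
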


\begin{proof}
Let $N$ be a block of imprimitivity for $X$ such that $1\in N$. For each $a\in N$, we have $a=1^{R(a)}\in N\cap N^{R_G(a)}$, which implies that $N=N^{R_G(a)}=Na$. For each $b\in G$, we have $1=1^{\Inn(b)}\in N\cap N^{\Inn(b)}$ and so $N=N^{\Inn(b)}$. Thus, $N$ is a normal subgroup of $G$.
\end{proof}

The next lemma is on Cayley graphs of nonabelian groups with normal connection sets.

\begin{lemma}\label{lem:canonicallower}
Let $G$ be a nonabelian group, and let $\Gamma=\Cay(G,S)$ with $S$ normal in $G$. Then
\[
\Aut(\Gamma)\geq(R(G)\rtimes\Aut(G,S))\rtimes\langle\iota\rangle\cong(G\rtimes\Aut(G,S))\rtimes\C_2.
\]
\end{lemma}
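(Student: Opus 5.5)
We need three things: that $R(G)$, $\Aut(G,S)$ and $\iota$ all lie in $\Aut(\Gamma)$; that the group they generate has the stated semidirect structure; and that it is isomorphic to $(G\rtimes\Aut(G,S))\rtimes\C_2$.

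\textbf{Containment.} $R(G)\leq\Aut(\Gamma)$ always holds. By Lemma~\ref{lem:normalset}, $\iota\in\Aut(\Gamma)$ since $S$ is normal. For $\alpha\in\Aut(G,S)$ and $x,y\in G$ we have
\[
y^\alpha(x^\alpha)^{-1}=(yx^{-1})^\alpha,
\]
which lies in $S$ if and only if $yx^{-1}\in S$. So $\Aut(G,S)$, acting naturally on $G$, lies in $\Aut(\Gamma)$.

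\textbf{Structure of $A:=R(G)\Aut(G,S)$.} The point stabilizer of $1$ in $\Sym(G)$ meets $R(G)$ trivially, while $\Aut(G,S)$ fixes $1$. Hence $R(G)\cap\Aut(G,S)=1$. The identity $\alpha^{-1}R(g)\alpha=R(g^\alpha)$ shows that $\Aut(G,S)$ normalizes $R(G)$. Therefore $A=R(G)\rtimes\Aut(G,S)$, and this is isomorphic to $G\rtimes\Aut(G,S)$ via $R$, which is injective.

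\textbf{Adjoining $\iota$.} Since $\iota$ commutes with every automorphism, $\iota$ normalizes $\Aut(G,S)$. Also $\iota R(g)\iota=L(g)$, and $L(g)=R(g^{-1})\Inn(g)$ on checking the actions: $x^{L(g)}=g^{-1}x$, while $x\mapsto xg^{-1}\mapsto g^{-1}xg^{-1}g=g^{-1}x$. Because $S$ is normal, $\Inn(g)\in\Aut(G,S)$, so $\iota$ normalizes $A$.

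\textbf{The main step: $\iota\notin A$.} This is where nonabelianness is used. Suppose $\iota=R(g)\alpha$ with $\alpha\in\Aut(G,S)$. Evaluating at $1$ gives $1=g^\alpha$, so $g=1$ and $\iota=\alpha$ is an automorphism. But inversion is an automorphism only when $G$ is abelian, a contradiction. Hence $\langle A,\iota\rangle=A\rtimes\langle\iota\rangle$ with $\langle\iota\rangle\cong\C_2$; here $\iota\neq1$ because a nonabelian group is not an elementary abelian $2$-group. This gives the stated subgroup of $\Aut(\Gamma)$ and the isomorphism.
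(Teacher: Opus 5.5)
Your proof is correct and follows essentially the same route as the paper: you show that $\iota$ normalizes $R(G)\rtimes\Aut(G,S)$ via $R(g)^\iota=L(g)=R(g^{-1})\Inn(g)$ and $\alpha^\iota=\alpha$. You then rule out $\iota\in R(G)\Aut(G,S)$ because it would lie in the stabilizer $\Aut(G,S)$, making inversion an automorphism of the nonabelian group $G$. You additionally verify $\Aut(G,S)\leq\Aut(\Gamma)$ and the semidirect structure of $R(G)\Aut(G,S)$, which the paper takes for granted.
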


\begin{proof}
Let $N=R(G)\rtimes\Aut(G,S)\leq\Aut(\Gamma)$. For $g\in G$ and $\alpha\in\Aut(G,S)$, we have
\[
R(g)^\iota=L(g)=R(g^{-1})\Inn(g)\in R(G)\Aut(G,S)=N\ \text{ and }\ \alpha^\iota=\alpha\in N.
\]
This shows that $\iota$ normalizes $N$. By Lemma~\ref{lem:normalset}, $\iota\in\Aut(\Gamma)$. If $\iota\in N$, then $\iota\in N_1=\Aut(G,S)$, forcing $G$ to be abelian, a contradiction. Hence $\Aut(\Gamma)\geq\langle N,\iota\rangle=N\rtimes\langle\iota\rangle$, as required.
\end{proof}

For a group $G$, the holomorph of $G$ is denoted by $\Hol(G)$, and we have
\[
\Hol(G)=\Nor_{\Sym(G)}(R(G))=R(G)\rtimes\Aut(G).
\]
Notice that $\iota$ centralizes $\Aut(G)$ and that
\[
R(G)^\iota=L(G)\leq L(G)R(G)=R(G)\Inn(G)\leq R(G)\Aut(G).
\]
We see that $\Hol(G)^\iota=\Hol(G)$, and so
\begin{equation}\label{eq:holiota}
\langle\Hol(G),\iota\rangle=\Hol(G)\langle\iota\rangle=R(G)\Aut(G)\langle\iota\rangle.
\end{equation}

\begin{lemma}\label{lem:regularsimple}
Let $T$ be a finite nonabelian simple group, and let $X$ be a regular subgroup of $\langle\Hol(T),\iota\rangle$ such that $X\cong T$. Then $X=L(T)$ or $R(T)$.
\end{lemma}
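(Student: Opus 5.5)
Set $Y=\langle\Hol(T),\iota\rangle=R(T)\Aut(T)\langle\iota\rangle$ as in~\eqref{eq:holiota}, and write $A=\Aut(T)$, which is the stabilizer of $1$ in $\Hol(T)$. The plan has four steps: pass to $\Hol(T)$, find the socle of $\Hol(T)$, show $X$ lies in that socle, and classify the regular subgroups $\cong T$ of $T\times T$.

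\emph{Step 1: reduction to $\Hol(T)$.} Since $\Hol(T)$ has index at most $2$ in $Y$, the subgroup $X\cap\Hol(T)$ has index at most $2$ in $X$. Because $X\cong T$ is nonabelian simple, it has no subgroup of index $2$, so $X\leq\Hol(T)$.

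\emph{Step 2: the socle of $\Hol(T)$.} We have $L(T)R(T)=L(T)\times R(T)\cong T\times T$, since $L(T)$ and $R(T)$ commute and $L(T)\cap R(T)=1$ as $Z(T)=1$. This group is normal in $\Hol(T)$, because $L(T)R(T)=R(T)\Inn(T)$ and $\Aut(T)$ normalizes both $R(T)$ and $\Inn(T)$. Its centralizer in $\Hol(T)$ is trivial, since $\Cen_{\Sym(T)}(R(T))=L(T)$ and $L(T)\cap\Cen(L(T))=Z(L(T))=1$. Hence $M:=L(T)R(T)$ is the unique minimal normal subgroup structure sitting below $\Hol(T)$: it equals $\Soc(\Hol(T))$, and
\[
\Hol(T)/M\cong\Aut(T)/\Inn(T)=\Out(T).
\]

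\emph{Step 3: $X\leq M$.} The image of $X$ in $\Hol(T)/M\cong\Out(T)$ is a quotient of a simple group, so it is either trivial or isomorphic to $T$. By the Schreier conjecture, which relies on the classification of finite simple groups, $\Out(T)$ is solvable, so the image is trivial and $X\leq M$. This use of the Schreier conjecture is the main point needing justification. Alternatively, one could avoid it by looking at $XM/M$ directly, but I would simply cite the Schreier conjecture.

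\emph{Step 4: classifying $X$ inside $M\cong T\times T$.} Now $X\leq L(T)\times R(T)$ with $X\cong T$. Consider the projections $\pi_L,\pi_R$ of $X$ onto the two factors; each kernel is normal in the simple group $X$.
\begin{itemize}
\item If $\pi_L$ is trivial, then $X\leq R(T)$, and comparing orders gives $X=R(T)$.
\item If $\pi_R$ is trivial, the same argument gives $X=L(T)$.
\item Otherwise both projections are injective, hence isomorphisms by order, and $X$ is a diagonal subgroup $\{L(t^{\varphi})R(t)\mid t\in T\}$ for some $\varphi\in\Aut(T)$.
\end{itemize}
In the diagonal case, $L(t^\varphi)R(t)$ maps $x\mapsto (t^{\varphi})^{-1}xt$. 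The point $x$ is fixed precisely when $t=x^{-1}t^{\varphi}x$, that is, when $t^{\varphi\,\Inn(x)}=t$ (with the appropriate identification). Every automorphism of a finite nonabelian group fixes a nontrivial element? That is not true in general. Instead, use the orbit of $1$: the image of $1$ under $L(t^\varphi)R(t)$ is $(t^{\varphi})^{-1}t$. Regularity requires the map $t\mapsto (t^{\varphi})^{-1}t$ to be a bijection, so $\varphi$ is fixed-point-free. A fixed-point-free automorphism of a finite group forces the group to be solvable (Rowley's theorem, again via the classification). This contradicts $T$ being nonabelian simple, so the diagonal case cannot occur, and $X=L(T)$ or $X=R(T)$.
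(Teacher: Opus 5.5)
Your proof is correct and follows essentially the paper's argument: Schreier's conjecture (solvability of $\Out(T)$) forces $X\le L(T)R(T)$, you then reduce to a diagonal subgroup $\{L(t^\varphi)R(t)\mid t\in T\}$, and regularity at the point $1$ (equivalently, $X\cap\Inn(T)=1$) makes $\varphi$ fixed-point-free, which contradicts Rowley's theorem. Your Step 1 index-two reduction and the socle discussion in Step 2 are harmless extras, since the paper simply observes that $\langle\Hol(T),\iota\rangle/L(T)R(T)$ is solvable.
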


\begin{proof}
Let $N=L(T)R(T)=L(T)\times R(T)$. Then $\langle\Hol(T),\iota\rangle/N$ is solvable as $\Out(T)$ is solvable. Since $X\cong T$ is nonabelian simple, it follows that $X\leq N$, and so either $X\in\{L(T),R(T)\}$ or $X=\{L(t)R(t^\alpha)\mid t\in T\}$ for some $\alpha\in\Aut(T)$. Suppose for a contradiction that the latter holds.
Since $X$ is regular, it has trivial intersection with the stabilizer $N_1=\Inn(T)=\{L(t)R(t)\mid t\in T\}$. Hence $t\neq t^\alpha$ for any $t\in T\setminus\{1\}$. In other words, $\alpha$ is a fixed-point-free automorphism of $T$. However, by~\cite{Rowley1995}, a finite group admitting such an automorphism is solvable, contrary to $T$ being nonabelian simple. This completes the proof.
\end{proof}

In~\cite[Theorem~1.4]{LPS2010}, Liebeck, Praeger and Saxl classified the primitive permutation groups with a regular almost simple subgroup. The following lemma is an immediate consequence of their result (one may also prove it directly, as in~\cite[Lemma~3.2]{PonomarenkoVasilev2018} for instance).

\begin{lemma}\label{lem:primitivecase}
Let $G$ be an almost simple group, and let $\Gamma=\Cay(G,S)$ be connected with $S$ normal in $G$. If $\Aut(\Gamma)$ is primitive on $G$, then either $\Gamma$ is a complete graph, or $G$ is simple with $\Aut(\Gamma)=(R(G)\rtimes \Aut(G,S))\rtimes\langle\iota\rangle$.
\end{lemma}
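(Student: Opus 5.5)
The plan is to apply the Liebeck--Praeger--Saxl classification~\cite[Theorem~1.4]{LPS2010} to the primitive group $A=\Aut(\Gamma)$, which contains the regular subgroup $R(G)\cong G$. Suppose $\Gamma$ is not complete. Then $\Gamma$ is also not empty, since it is connected and $|G|>1$. Hence $A$ is a proper primitive subgroup of $\Sym(G)$, and we want to show that $A$ is small. By Lemma~\ref{lem:normalset}, $A$ contains $L(G)R(G)\langle\iota\rangle$. Because $G$ is almost simple, $L(G)\cap R(G)$ is isomorphic to $Z(G)=1$, so $L(G)R(G)\cong G\times G$. In particular $A$ contains the two commuting regular subgroups $L(G)$ and $R(G)$, and its point stabilizer $A_1$ contains $\Inn(G)\cong G$.

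The first step is to rule out the case $A\geq\Alt(G)$ using $\iota$. If $\Alt(G)\leq A$, then $A$ is $2$-transitive on $G$ (as $|G|\geq 60$), so $\Gamma$ is complete or empty. Both are excluded.

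Next we go through the cases of \cite[Theorem~1.4]{LPS2010}. Apart from $A_n$ or $S_n$, a primitive group with a regular almost simple subgroup is either of diagonal type with socle $T\times T$ (or in some cases $T^k$), or almost simple and listed in their Table~2 (reproduced as Table~\ref{tab:exceptions}). In the tabulated almost simple cases, $A_1$ must contain a subgroup isomorphic to $G$ of index $|A_1|/|G|$, and $|A|=|G|\cdot|A_1|$. Comparing $X_1$ with $G$ in each row, the stabilizer $X_1$ is too small or has the wrong structure to contain $\Inn(G)\cong G$; for instance, $\C_{59}\rtimes\C_{29}$ cannot contain $\A_5$. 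A more uniform argument is that $A$ would have to contain $G\times G$ with a regular factor, and the orders forbid this. This elimination is the tedious, case-checking step.

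So $A$ has socle $N=T\times T=L(T)R(T)$ and lies in $\langle\Hol(T),\iota\rangle$, namely $N.(\Out(T)\times\C_2)$. Here $R(G)$ regular of order $|T|\cdot|G/T|$ forces $|G|=|T|$, so $G=T$ is simple. Indeed, in the diagonal-type group $N.\Out(T).\C_2$ the regular subgroups meet $N$ in a regular normal subgroup, so $|G|\leq|T|$; I expect this to be the subtlest point. Finally, $A\leq\Nor_{\Sym(T)}(N)=\langle\Hol(T),\iota\rangle=R(T)\Aut(T)\langle\iota\rangle$ by~\eqref{eq:holiota}. Intersecting with $\Aut(\Gamma)$, the stabilizer of $1$ is $\Aut(T)\cap A=\Aut(T,S)$ (together with $\iota$). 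This gives $A=(R(G)\rtimes\Aut(G,S))\rtimes\langle\iota\rangle$, with the reverse inclusion supplied by Lemma~\ref{lem:canonicallower}.

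The main obstacle is handling the table of exceptions and the diagonal case where $G>T$, that is, showing $G$ must be simple. As a fallback I would follow the direct argument of \cite[Lemma~3.2]{PonomarenkoVasilev2018}. By O'Nan--Scott, a primitive group containing $L(G)\times R(G)$, two commuting transitive subgroups, has centralizers that are transitive. This forces $L(G)$ and $R(G)$ to be regular normal subgroups of the socle-type structure, which yields diagonal type with $\Soc(A)=T\times T$ and $G=T$.
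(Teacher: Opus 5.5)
Your route is the same as the paper's, which treats the lemma as a consequence of \cite[Theorem~1.4]{LPS2010} (or \cite[Lemma~3.2]{PonomarenkoVasilev2018}). You apply that theorem to $A=\Aut(\Gamma)$, rule out $A\geq\Alt(G)$ by $2$-transitivity, and in the diagonal case read off $A_1=\Aut(T,S)\times\langle\iota\rangle$ from $A\leq\Nor_{\Sym(T)}(L(T)R(T))=\Hol(T)\langle\iota\rangle$. Those steps are fine.

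The gap is your elimination of the tabulated almost simple cases. Your criterion that $X_1$ is ``too small or has the wrong structure to contain $\Inn(G)$'' is false for $G=\Sy_5$. There, $X_1=\Omega_7(2)\cong\mathrm{Sp}_6(2)$ (row $X=\Omega_8^+(2)$) and $X_1=\mathrm{O}_8^-(2)$ (row $X=\mathrm{Sp}_8(2)$) both contain $\mathrm{O}_6^+(2)\cong\Sy_8\geq\Sy_5$. Your ``uniform'' order argument also fails, since $|\mathrm{Sp}_8(2)|$ is vastly larger than $120^2$. In fact, as the paper notes when stating Lemma~\ref{lem:lpsprimitive}, the purely group-theoretic argument of \cite{PonomarenkoVasilev2018} could not exclude $L(\Sy_5)R(\Sy_5)\leq\mathrm{Sp}_8(2)$; that needs a computer check. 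So the hypothesis $A\geq L(G)R(G)\langle\iota\rangle$ alone does not dispose of the table.

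What is missing is any use of the graph: $\Gamma$ is neither complete nor empty, and every $A_1$-orbit on $G\setminus\{1\}$ is a union of conjugacy classes of $G$. With this, the two problem rows fall as follows.
\begin{itemize}
\item $\mathrm{Sp}_8(2)$ on the $120$ cosets of $\mathrm{O}_8^-(2)$ is $2$-transitive (the action on minus-type quadratic forms). So $\Gamma$ would be complete, exactly as in your $\Alt(G)$ step.
\item $\Omega_8^+(2)$ on $120$ points is, up to triality, the rank-$3$ action on nonsingular vectors, with subdegrees $1,56,63$. Since $A_1\geq X_1$, the set $S$ would have size $56$, $63$ or $119$. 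Neither $56$ nor $63$ is a sum of the nonidentity class sizes $10,15,20,20,24,30$ of $\Sy_5$, and $119$ gives a complete graph.
\end{itemize}
The remaining rows still require explicit checks that you deferred. For example, $17$ divides $|\PSL_2(16).4|$ but not $|G_2(4).2|$, and $\mathrm{P\Gamma L}_2(16)$ contains no $\Sy_5$; the $\Omega_8^+(4)$ row is less immediate. Alternatively, cite \cite[Lemma~3.2]{PonomarenkoVasilev2018}, whose only leftover case is the $2$-transitive one above.

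Two smaller points.
\begin{itemize}
\item In the diagonal case, take $G=T$ directly from the statement of \cite[Theorem~1.4]{LPS2010}. Your identification of $\Soc(A)$ with $L(T)R(T)$ presupposes it: if $G>T$, then $L(T)R(T)$ is intransitive, with the $T$-cosets as orbits. Once $G=T$, you get $L(T)R(T)\leq\Soc(A)$ by perfectness, and equality by orders.
\item Your fallback is wrong as stated. Two commuting regular subgroups of a primitive group need not be normal in it; $\Sym(G)\geq L(G)R(G)$ is a counterexample.
\end{itemize}
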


The following lemma is a consequence of~\cite[Theorem~1.4]{LPS2010} (a similar conclusion from the same theorem was obtained in~\cite[Lemma~3.2]{PonomarenkoVasilev2018}, but the exceptional possibility $G=\Sy_5$ and $X=\mathrm{Sp}_8(2)$ listed there can be ruled out, for example, by a direct computation in \textsc{Magma}~\cite{Magma}).

\begin{lemma}\label{lem:lpsprimitive}
Let $G$ be an almost simple group with socle $T$, let $X$ be a primitive permutation group on $G$ containing $L_G(G)R_G(G)$. Then the following statements hold:
\begin{enumerate}[label=\textup{(\alph*)},leftmargin=*]
\item\label{item:lps-nonsimple} If $G\neq T$, then $\Alt(G)\leq X$.
\item\label{item:lps-simple} If $G=T$, then either $\Alt(T)\leq X$ or $X\leq\langle\Hol(T),\iota_T\rangle$.
\end{enumerate}
\end{lemma}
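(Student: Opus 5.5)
The plan is to read Lemma~\ref{lem:lpsprimitive} off from~\cite[Theorem~1.4]{LPS2010}, which lists all primitive groups $X$ of degree $|G|$ containing a regular almost simple subgroup $G$. Here $R_G(G)\leq X$ is such a regular subgroup. The proof is a case analysis over that classification, using the extra hypothesis that $X$ also contains $L_G(G)$. Since $L_G(G)$ and $R_G(G)$ centralize each other, $X$ contains two commuting regular subgroups isomorphic to $G$. In particular, $X\geq L_G(T)R_G(T)$ and $X_1\geq\Inn(G)$.

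First I would separate the conclusions of~\cite[Theorem~1.4]{LPS2010} into three types.

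\textbf{(i)} $X\geq\Alt(G)$. This is allowed in both parts of the lemma.

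\textbf{(ii)} $X$ is of diagonal or holomorph type with socle $T\times T$ or $T$. The possible structure here is essentially $T\times T\leq X\leq\langle\Hol(T),\iota_T\rangle$, which forces $G=T$ because $|G|=|T|$. This is exactly the second alternative in part~\ref{item:lps-simple}.

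\textbf{(iii)} The finitely many sporadic configurations, namely the entries of~\cite[Table~2]{LPS2010} and their companions, where $X$ is almost simple with socle not containing $\Alt(G)$.

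For $G\neq T$, types (ii) and (iii) must be eliminated. Type (ii) cannot occur, since it forces $|G|=|T|$. For type (iii), the key point is that $\Cen_{\Sym(G)}(R_G(G))=L_G(G)$ must lie in $X$. So I would show that, for each exceptional triple $(G,\Soc(X),\Soc(X)_1)$ in Table~\ref{tab:exceptions}, the centralizer in $X$ of the regular subgroup $R_G(G)$ is too small to contain a copy of $G$. Equivalently, $X_1$ does not contain a subgroup isomorphic to $G/Z(G)=G$ that acts on the points as conjugation.

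The check is a comparison of orders and subgroup structure. For example, $\Cen_X(R(G))\cong L(G)$ requires $X_1\cap L(G)R(G)=\Inn(G)\cong G$. So $X_1$ would need a subgroup isomorphic to $G$, and moreover one that, together with $R(G)$, generates a subgroup of order $|G|^2/|Z(G)|$. In the infinite families this fails: $\C_p\rtimes\C_{(p-1)/2}$ or $\PSL_2(p)$ cannot contain $\Sy_{p-2}$ for $p\geq7$, and $\PSL_2(p^2).\C_2$ cannot contain $\A_{p^2-2}$. This is immediate from the orders, or from the subgroup structure of $\PSL_2$.

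The finitely many small cases (the $\Sy_5$, $\PSL_2(16).\C_4$ and $\PSL_3(4).\C_2$ rows, and the $\A_5$, $\A_7$ rows when $G=T$) will be handled by order arguments where possible. Where they fail, I will use a direct computation, as indicated for $\Sy_5$ and $\mathrm{Sp}_8(2)$. This is the step I expect to be the main obstacle. Some rows, such as $\Sy_5<\mathrm{Sp}_8(2)$ with stabilizer $\mathrm{O}_8^-(2)$, do have point stabilizers large enough to contain $\Sy_5$. For these rows one genuinely needs to verify that no conjugate embedding has centralizer of $R(G)$ equal to a regular $\Sy_5$, which is done in \textsc{Magma}~\cite{Magma}.

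Once every exceptional row is excluded in the presence of $L_G(G)$, type (i) is the only survivor when $G\neq T$, giving~\ref{item:lps-nonsimple}. When $G=T$, the survivors are type (i), or type (ii) contained in $\langle\Hol(T),\iota_T\rangle$ by~\eqref{eq:holiota}, giving~\ref{item:lps-simple}.
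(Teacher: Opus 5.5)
Your proposal takes essentially the same route as the paper. The lemma is read off from \cite[Theorem~1.4]{LPS2010}: the diagonal case yields $X\leq\langle\Hol(T),\iota_T\rangle$, the exceptional almost simple rows are excluded because $L_G(G)\leq X$ forces $\Inn(G)\leq X_1$, and the stubborn row $\Sy_5<\mathrm{Sp}_8(2)$ is settled by a \textsc{Magma} computation. The paper does not redo your row-by-row elimination; it cites \cite[Lemma~3.2]{PonomarenkoVasilev2018}, which already disposes of every other exceptional row, including rows such as $\Omega_8^+(2)$ with stabilizer $\Omega_7(2)\cong\mathrm{Sp}_6(2)\geq\Sy_5$, where, as you anticipate, order arguments alone do not suffice.
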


We now turn to Cayley graphs of simple groups with normal connection sets.

\begin{lemma}\label{lem:simplecase}
Let $T$ be a nonabelian simple group, and let $\Gamma=\Cay(T,S)$ with a nonempty normal connection set $S$. Then $\Gamma$ is connected, $\Aut(\Gamma)$ is primitive on $T$, and one of the following holds:
\begin{enumerate}[label=\textup{(\alph*)},leftmargin=*]
\item $S\neq T\setminus \{1\}$ and $\Aut(\Gamma)=(R(T)\rtimes\Aut(T,S))\rtimes\langle\iota\rangle\cong(T\rtimes\Aut(T,S))\rtimes\C_2$;
\item $S=T\setminus \{1\}$, $\Gamma$ is a complete graph, and $\Aut(\Gamma)=\Sym(T)$.
\end{enumerate}
\end{lemma}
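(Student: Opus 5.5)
Connectedness comes first. Since $S$ is nonempty and normal, $\langle S\rangle$ is a nontrivial normal subgroup of the simple group $T$. Hence $\langle S\rangle=T$, and $\Gamma$ is connected.

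Next I will show that $\Aut(\Gamma)$ is primitive on $T$. Put $X=\Aut(\Gamma)$. By Lemma~\ref{lem:normalset}, $X$ contains both $R(T)$ and $\Inn(T)$. Let $B$ be a block of imprimitivity for $X$ containing $1$. By Lemma~\ref{lem:blocknormal}, $B$ is a normal subgroup of $T$, so $B=1$ or $B=T$. Thus all blocks are trivial and $X$ is primitive.

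Finally, apply Lemma~\ref{lem:primitivecase} with $G=T$: either $\Gamma$ is complete, or $\Aut(\Gamma)=(R(T)\rtimes\Aut(T,S))\rtimes\langle\iota\rangle$. If $\Gamma$ is complete, then $S=T\setminus\{1\}$ and $\Aut(\Gamma)=\Sym(T)$, which is (b). If $S\neq T\setminus\{1\}$, then $\Gamma$ is not complete, so the second alternative holds. The isomorphism with $(T\rtimes\Aut(T,S))\rtimes\C_2$ comes from Lemma~\ref{lem:canonicallower}, since $T$ is nonabelian; this gives (a). The two cases $S=T\setminus\{1\}$ and $S\ne T\setminus\{1\}$ are exhaustive. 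No step here is a real obstacle, because all the heavy lifting (the Liebeck--Praeger--Saxl classification) is already contained in Lemma~\ref{lem:primitivecase}.

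For a proof avoiding that lemma, one could instead use Lemma~\ref{lem:lpsprimitive}\ref{item:lps-simple}. If $\Alt(T)\le X$, then $X$ is $2$-transitive (as $|T|\ge60$), so $\Gamma$ is complete or empty. Otherwise $X\le\langle\Hol(T),\iota\rangle=R(T)\Aut(T)\langle\iota\rangle$. The stabilizer of $1$ in this overgroup is $\Aut(T)\times\langle\iota\rangle$, and $\iota\in X$. An element $\alpha\iota^{\epsilon}$ of this stabilizer lies in $X$ exactly when $S^{\alpha}=S$, since $S^\iota=S$. Therefore $X_1=\Aut(T,S)\times\langle\iota\rangle$, and this yields the same formula.
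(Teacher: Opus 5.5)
Your proof is correct and is essentially the paper's argument: connectedness from $\langle S\rangle\trianglelefteq T$, primitivity from Lemma~\ref{lem:blocknormal} (with Lemma~\ref{lem:normalset} supplying $\Inn(T)\le\Aut(\Gamma)$), and the dichotomy from Lemma~\ref{lem:primitivecase} according to whether $S=T\setminus\{1\}$. Your alternative route through Lemma~\ref{lem:lpsprimitive}\ref{item:lps-simple} is also sound, including the computation $X_1=\Aut(T,S)\times\langle\iota\rangle$, but it relies on the same Liebeck--Praeger--Saxl classification.
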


\begin{proof}
Since $S$ is nonempty and normal, $\langle S\rangle=T$, and so $\Gamma$ is connected. Lemma~\ref{lem:blocknormal} and the simplicity of $T$ imply that $\Aut(\Gamma)$ is primitive. The conclusion now follows from Lemma~\ref{lem:primitivecase}, since $\Gamma$ is complete if and only if $S=T\setminus\{1\}$.
\end{proof}

We write $\Gamma\circ\Sigma$ for the lexicographic product with vertex set $V(\Gamma)\times V(\Sigma)$, in which $(x,u)$ is adjacent to $(y,v)$ if either $x$ is adjacent to $y$ in $\Gamma$, or $x=y$ and $u$ is adjacent to $v$ in $\Sigma$.
The following classical result is a consequence of Sabidussi~\cite{Sabidussi1959}.

\begin{lemma}\label{lem:sabidussi}
Let $\Gamma$ and $\Sigma$ be finite graphs. If both $\Sigma$ and its complement are connected, then
\[
 \Aut(\Gamma\circ\Sigma)=\Aut(\Sigma)\wr\Aut(\Gamma).
\]
\end{lemma}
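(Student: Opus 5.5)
Since the paper presents this as a consequence of Sabidussi~\cite{Sabidussi1959}, the plan is to derive it from the standard description of automorphisms of a lexicographic product. First we note the easy inclusion $\Aut(\Sigma)\wr\Aut(\Gamma)\leq\Aut(\Gamma\circ\Sigma)$. Here an element $(f,\gamma)$, with $f\colon V(\Gamma)\to\Aut(\Sigma)$ and $\gamma\in\Aut(\Gamma)$, acts by $(x,u)\mapsto(x^\gamma,u^{f(x)})$. It preserves both kinds of adjacency: adjacency coming from $x\sim y$ in $\Gamma$ is preserved by $\gamma$, and adjacency inside a fibre $\{x\}\times V(\Sigma)$ is preserved by $f(x)$.

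For the reverse inclusion, the key step is to show that the fibres $\{x\}\times V(\Sigma)$ form an $\Aut(\Gamma\circ\Sigma)$-invariant partition. Once that is known, an automorphism $\varphi$ permutes the fibres. The induced permutation $\gamma$ of $V(\Gamma)$ is an automorphism of $\Gamma$, because for $x\neq y$ the fibres over $x$ and $y$ are either completely joined or completely non-adjacent, according to whether $x\sim y$. The restriction of $\varphi$ to each fibre, read through the identification with $V(\Sigma)$, is an automorphism $f(x)$ of $\Sigma$, since adjacency within a fibre is exactly adjacency in $\Sigma$. Hence $\varphi=(f,\gamma)$.

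To establish that the fibres are blocks, I would use the relation $R$ defined by $(x,u)\mathrel{R}(y,v)$ if the two vertices have the same neighbours outside $\{x,y\}\times V(\Sigma)$. The cleaner choice is to work with \emph{modules}: sets $M$ such that every vertex outside $M$ is adjacent to all of $M$ or to none of it. Each fibre $F_x$ is a module. I then want to show that the fibres are precisely the maximal proper modules among those contained in a fibre, or to characterize them intrinsically in some similar way. The approach I would try first is this. Let $M$ be a module of $\Gamma\circ\Sigma$ that meets two fibres $F_x$ and $F_y$ with $x\neq y$. Inside $F_x$, the vertices of $M\cap F_x$ and of $F_x\setminus M$ must each be uniformly adjacent or uniformly non-adjacent to the points of $M$ lying outside $F_x$. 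Combining this with the connectivity of $\Sigma$ and of its complement, one shows that $M\supseteq F_x$ whenever $M$ meets $F_x$ nontrivially and also meets another fibre. The reason is that connectivity of $\Sigma$ produces an edge from $M\cap F_x$ to $F_x\setminus M$, while connectivity of the complement produces a non-edge, and the module condition applied to a vertex of $M$ in another fibre then yields a contradiction.

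Hence every module is either contained in a single fibre or is a union of fibres. The images of fibres under $\varphi$ are modules of size $|V(\Sigma)|$. Suppose such an image $F_x^\varphi$ is not a fibre. Then it is a union of fibres, and its size forces it to be exactly one fibre, a contradiction. The only other possibility is that it lies strictly inside a fibre, which its size also rules out. So $\varphi$ permutes the fibres. I expect the module argument to be the main obstacle, specifically checking the case where $M$ meets a fibre only partially. This is exactly where both connectivity hypotheses are needed: a disconnected $\Sigma$, or one with disconnected complement, gives $\Gamma\circ\Sigma$ extra modules and hence extra automorphisms. The rest is bookkeeping.
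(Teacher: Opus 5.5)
Your proof is correct but takes a different route from the paper. The paper gives no argument of its own; it quotes Sabidussi's theorem, which says that $\Aut(\Gamma\circ\Sigma)=\Aut(\Sigma)\wr\Aut(\Gamma)$ fails exactly in two situations: $\Sigma$ is disconnected and $\Gamma$ has two distinct vertices with equal open neighbourhoods, or $\overline{\Sigma}$ is disconnected and $\Gamma$ has two distinct vertices with equal closed neighbourhoods. The lemma is the case where both obstructions are vacuous. You instead prove this special case from first principles, and the key step holds up. Suppose a module $M$ contains some $m\in F_y$ and meets $F_x$ ($x\neq y$) in a nonempty proper subset. Then every vertex of $F_x\setminus M$ has the same adjacency to $m$, namely adjacent if and only if $x\sim y$ in $\Gamma$. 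Since these vertices lie outside $M$, each is then uniformly adjacent or non-adjacent to all of $M\cap F_x$. This disconnects $\overline{\Sigma}$ when $x\sim y$ and disconnects $\Sigma$ when $x\not\sim y$. Strictly, the module condition is applied to the vertices of $F_x\setminus M$, with $m$ serving only as a witness; your phrasing blurs this, and each case uses only one of the two hypotheses. The size count then forces every automorphism to map fibres to fibres, which is exactly what the paper takes from this lemma in Lemma~\ref{lem:khblocks}. Your route gives a short, citation-free proof tailored to that use, while the citation also gives the converse. In particular, your closing remark that a disconnected $\Sigma$ or $\overline{\Sigma}$ produces extra automorphisms is too strong. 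For example, $K_2\circ\overline{K_2}$ is the $4$-cycle, whose automorphism group is $\Sym(2)\wr\Sym(2)$ even though $\overline{K_2}$ is disconnected. That remark is not part of your proof, so nothing there needs fixing.
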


For a Cayley graph $\Gamma=\Cay(G,S)$ with an $\Aut(\Gamma)$-invariant partition into right cosets of a subgroup $H\leq G$, identify each coset with $H$ by right translation. Since each induced graph is thereby identified with $\Gamma[H]$, this shows that $\Aut(\Gamma)$ is contained in the wreath product $\Aut(\Gamma[H])\wr\Sym(G/H)$ in its imprimitive action on $G$, where $G/H$ denotes the set of right $H$-cosets. The final lemma of this section expresses the automorphism group of a certain Cayley graph as an intersection of two wreath products.

\begin{lemma}\label{lem:gwintersection}
Let $\Gamma=\Cay(G,S)$ be a finite Cayley graph, and let $H$ and $K$ be normal subgroups of $G$ with $K\leq H$ such that the $K$-cosets and the $H$-cosets both form $\Aut(\Gamma)$-invariant partitions. If $S\setminus H$ is a union of $K$-cosets, then
\[
\Aut(\Gamma)=\bigl(\Aut(\Gamma[H])\wr\Sym(G/H)\bigr)\cap\bigl(\Sym(K)\wr\Aut(\Gamma/K)\bigr),
\]
where the wreath products preserve the partitions of $H$-cosets and $K$-cosets, respectively.
\end{lemma}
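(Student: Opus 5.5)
The inclusion ``$\subseteq$'' is immediate. Let $\varphi\in\Aut(\Gamma)$. Since the $H$-cosets form an $\Aut(\Gamma)$-invariant partition, $\varphi$ permutes the $H$-cosets. For each $H$-coset $Hx$, the map $\varphi$ carries $\Gamma[Hx]$ isomorphically onto $\Gamma[Hx^\varphi]$. Each of these induced graphs is identified with $\Gamma[H]$ by right translation, so, as noted before the lemma, $\varphi$ lies in $\Aut(\Gamma[H])\wr\Sym(G/H)$. Likewise $\varphi$ permutes the $K$-cosets. Whenever $Kx$ and $Ky$ are adjacent in $\Gamma/K$, some vertex of $Kx$ is adjacent to some vertex of $Ky$, and the same then holds for their images. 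So $\varphi$ induces an automorphism of $\Gamma/K$ and lies in $\Sym(K)\wr\Aut(\Gamma/K)$.

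For ``$\supseteq$'', let $\varphi$ lie in the intersection, and let $x,y\in G$ be distinct. We check that $\varphi$ preserves both adjacency and non-adjacency of $x$ and $y$. Since $\varphi^{-1}$ also lies in the intersection and $G$ is finite, it is enough to show that $\varphi$ maps edges to edges. The argument splits according to whether $yx^{-1}\in H$.

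\emph{Case 1: $yx^{-1}\in H$.} Here $x,y$ lie in a common $H$-coset $Hx$. Since $\varphi\in\Aut(\Gamma[H])\wr\Sym(G/H)$, its restriction to $Hx$ is, after identifying $Hx$ and $Hx^\varphi$ with $H$ by right translation, an automorphism of $\Gamma[H]$. Right translations are automorphisms of $\Gamma$, so the identifications carry $\Gamma[Hx]$ and $\Gamma[Hx^\varphi]$ onto $\Gamma[H]$. Hence the restriction is an isomorphism $\Gamma[Hx]\to\Gamma[Hx^\varphi]$, and adjacency between $x$ and $y$ is preserved.

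\emph{Case 2: $yx^{-1}\notin H$.} Here $x$ and $y$ lie in different $H$-cosets, and hence in different $K$-cosets $Kx\neq Ky$, because $K\le H$. The key observation is that $x\sim y$ holds exactly when $Kx\sim Ky$ in $\Gamma/K$. One direction is the definition of the quotient. For the other, suppose $Kx\sim Ky$. Then there are $k,k'\in K$ with $k'y(kx)^{-1}\in S$. Since $K$ is normal, $k'yx^{-1}k^{-1}\in KyxK^{-1}=Kyx^{-1}$. This element lies outside $H$, because $yx^{-1}\notin H$ and $K\le H$. So it lies in $S\setminus H$, which is a union of $K$-cosets by hypothesis. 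Therefore $Kyx^{-1}\subseteq S$, and in particular $yx^{-1}\in S$. Now $\varphi$ induces an automorphism of $\Gamma/K$, so $Kx\sim Ky$ if and only if $Kx^\varphi\sim Ky^\varphi$. Also $\varphi$ preserves the $H$-partition, so $y^\varphi(x^\varphi)^{-1}\notin H$, and the same equivalence applies to the pair $x^\varphi,y^\varphi$. Hence adjacency between $x$ and $y$ is preserved.

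The only point requiring care is the equivalence in Case 2. It relies on three things: the normality of $K$, so that left and right $K$-cosets agree; the inclusion $K\le H$, which keeps the relevant elements outside $H$; and the hypothesis on $S\setminus H$. Apart from that, the argument is bookkeeping.
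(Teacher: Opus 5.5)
Your proof is correct and follows essentially the same route as the paper. The inclusion of $\Aut(\Gamma)$ in the intersection comes from the invariance of both partitions. For the reverse inclusion, you handle adjacency inside an $H$-coset via the first wreath product, and adjacency between distinct $H$-cosets via the equivalence $x\sim y\Leftrightarrow Kx\sim Ky$ in $\Gamma/K$, which you verify in more detail than the paper does. (The expression $KyxK^{-1}$ in Case~2 is a typo for $Kyx^{-1}K$, which equals $Kyx^{-1}$ by normality of $K$.)
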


\begin{proof}
Let $X=\Aut(\Gamma[H])\wr\Sym(G/H)$ and $Y=\Sym(K)\wr\Aut(\Gamma/K)$. Suppose that $S\setminus H$ is a union of $K$-cosets.
Since the $H$- and $K$-coset partitions are both $\Aut(\Gamma)$-invariant, we have $\Aut(\Gamma)\leq X\cap Y$.

Conversely, let $f\in X\cap Y$. Membership in $X$ implies that $f$ permutes the $H$-cosets and preserves adjacency within each of them. Between distinct $H$-cosets, since $S\setminus H$ is a union of $K$-cosets, the bipartite graph between each pair of $K$-cosets is either complete bipartite or edgeless. Thus, for vertices $u$ and $v$ in distinct $H$-cosets, adjacency in $\Gamma$ is equivalent to adjacency of $Ku$ and $Kv$ in $\Gamma/K$. Then membership in $Y$, together with preservation of the $H$-coset partition, implies that $f$ also preserves adjacency between distinct $H$-cosets. Hence $f\in\Aut(\Gamma)$.
\end{proof}

\begin{remark}
The hypothesis that $S\setminus H$ is a union of $K$-cosets is the \emph{generalized wreath} condition for a Cayley graph $\Cay(G,S)$ if $1<K\leq H<G$; see~\cite[Definition~3.2]{ABDKM2018} and~\cite[Definition~4.1]{DSV2016}.
\end{remark}

\section{Automorphism groups}\label{sec:automorphisms}

Throughout this section, let $G$, $T$, $S$, $\Gamma$, $\overline{P}$, $P$, $D$, $H$ and $K$ be as defined in Theorem~\ref{thm:main}.

\subsection{Special cases}

We start with the following observation.

\begin{lemma}\label{lem:rigidity}
Let $g\in G$, and let $C$ be a nonempty proper subset of $Tg$ that is invariant under conjugation by $T$. Then $\{t\in T\mid Ct=C\}=1$.
\end{lemma}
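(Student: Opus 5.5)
Let $N=\{t\in T\mid Ct=C\}$. The plan is to show that $N$ is a normal subgroup of $T$, then use simplicity of $T$ and the properness of $C$ in $Tg$ to force $N=1$.

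First, $N$ is a subgroup of $T$: it is the stabilizer of the set $C$ under right multiplication by $T$. Next we show $N$ is normal in $T$. Let $t\in N$ and $u\in T$. Since $C$ is invariant under conjugation by $T$, we have $C^u=u^{-1}Cu=C$. Then
\[
C(u^{-1}tu)=u^{-1}(uCu^{-1})tu=u^{-1}(Ct)u=u^{-1}Cu=C,
\]
because $uCu^{-1}=C$. Hence $u^{-1}tu\in N$, so $N\trianglelefteq T$.

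By simplicity of $T$, either $N=1$ or $N=T$. Suppose $N=T$. Since $C$ is nonempty, pick $c\in C$. Then $cT\subseteq C$. Because $T\trianglelefteq G$ and $c\in Tg$, we have $cT=Tc=Tg$. This gives $Tg\subseteq C$, contradicting that $C$ is a proper subset of $Tg$. Therefore $N=1$.

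There is no real obstacle here. The only point needing care is that the conjugation-invariance of $C$ is used in the form $uCu^{-1}=C$ for every $u\in T$, together with the fact that left and right $T$-cosets coincide because $T$ is normal in $G$.
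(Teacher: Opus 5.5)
Your proof is correct and is essentially the paper's argument: both show that the right-multiplication stabilizer of $C$ in $T$ is normal in $T$ by using the $T$-conjugation invariance of $C$, and then use simplicity together with $C\neq Tg$ to rule out the stabilizer being all of $T$. You also spell out why $N=T$ would force $Tg\subseteq C$ (via $cT=Tc=Tg$), a step the paper leaves implicit.
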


\begin{proof}
Let $X=\{t\in T\mid Ct=C\}$. Clearly, $X$ is a subgroup of $T$. Since $C^u=C$ for each $u\in T$, one has $Ct^u=(Ct)^u=C^u=C$ for each $t\in X$. Hence $X\trianglelefteq T$, and so $X=1$ or $T$. The latter possibility would make the nonempty set $C$ invariant under right multiplication by $T$, forcing $C=Tg$, a contradiction. Therefore, $X=1$.
\end{proof}

We next show that the $T$-cosets, which need not be blocks in general, are forced to be blocks once there exists an $S$-partial $T$-coset outside $T$.

\begin{lemma}\label{lem:tblocks}
Suppose that $\overline{P}\neq1$. Then the $T$-cosets form an $\Aut(\Gamma)$-invariant partition of $G$, and $\Aut(\Gamma)_T$ is primitive on $T$.
\end{lemma}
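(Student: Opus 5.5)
The plan is to study the block systems of $X=\Aut(\Gamma)$ using Lemma~\ref{lem:blocknormal}. By Lemma~\ref{lem:normalset}, $X$ contains $R(G)$ and $\Inn(G)$. So every block of imprimitivity of $X$ containing $1$ is a normal subgroup of $G$. Since $G$ is almost simple with socle $T$, such a block is either $\{1\}$ or a normal subgroup containing $T$.

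First I will rule out primitivity of $X$. Since $\overline{P}\neq1$, there is an $S$-partial coset $Tg$ with $g\notin T$, so $G\neq T$. If $X$ were primitive, Lemma~\ref{lem:lpsprimitive}\ref{item:lps-nonsimple} would give $\Alt(G)\leq X$. Then $\Gamma$ would be complete or empty. Emptiness contradicts connectedness. Completeness means $S=G\setminus\{1\}$, which leaves no $S$-partial coset. Hence $X$ is imprimitive. I then take a minimal nontrivial block $B$ containing $1$. Then $B\trianglelefteq G$, $T\leq B$, and the setwise stabilizer $X_B$ induces a primitive group $X_B^B$ on $B$. The goal is to prove $B=T$. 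Once that holds, the $T$-cosets are the block system generated by $B$, hence $X$-invariant, and $\Aut(\Gamma)_T=X_B$ is primitive on $T$ by minimality, which is exactly the statement.

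Suppose instead that $T<B$. Then $B$ is almost simple with socle $T$ and $B\neq T$. The group $X_B^B$ contains the restrictions of $R(B)$ and $\Inn(B)$ (the latter coming from $\Inn(G)$), hence contains $L_B(B)R_B(B)$. By Lemma~\ref{lem:lpsprimitive}\ref{item:lps-nonsimple}, $X_B^B\geq\Alt(B)$, which is highly transitive on $B$. Now consider the family
\[
\mathcal F=\{\Gamma(w)\cap B\mid w\in G\setminus B\}.
\]
This family is invariant under $X_B^B$, hence under $\Alt(B)$, and it has at most $|G\setminus B|$ members. I will produce a member of $\mathcal F$ whose size $k$ satisfies $2\le k\le|B|-2$ and whose $\Alt(B)$-orbit is too large. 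Such an orbit has $\binom{|B|}{k}$ members, which exceeds $|G|$ because $|G:B|\le|\Out(T)|$ is tiny compared with $|B|$.

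To find the suitable $w$, I will use the partial coset. The set $C=S\cap Tg$ is a nonempty proper subset of $Tg$ invariant under $T$-conjugation. If $Tg\subseteq B$, I take $w$ in a $B$-coset other than $B$ and consider $\Gamma(w)\cap Bg'$ instead. Alternatively, and more cleanly, I will run the same counting argument inside $B$ on the family $\{\Gamma(x)\cap Tg\}_{x\in T}$ if $Tg\subseteq B$. That family is $\{Ct\}_{t\in T}$, and by Lemma~\ref{lem:rigidity} it consists of exactly $|T|$ distinct sets. This contrasts with the $\Alt(B)$-symmetry, which moves $T$-cosets arbitrarily inside $B$ and would force far more such sets. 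If $Tg\not\subseteq B$, the set $\Gamma(g^{-1})\cap T\supseteq Cg^{-1}$ is a nonempty proper piece, and I will use $\Gamma(g^{-1})\cap B$.

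The main obstacle is this last step: guaranteeing that the chosen neighbourhood trace on $B$ is not of size $0,1,|B|-1,|B|$, where the orbit bound fails. This requires combining Lemma~\ref{lem:rigidity} with the normality of $S$ to control the sizes of $S$-partial pieces, namely unions of $T$-conjugacy classes. I expect to need a separate treatment of small cases such as $B=\Sy_5$. I would first try to show that $|S\cap Tg|\ge 2$ and $|Tg\setminus S|\ge2$ directly from conjugacy-class sizes in $Tg$. Each nontrivial $T$-class there has size at least the minimal index of a proper subgroup of $T$, which is at least $5$.
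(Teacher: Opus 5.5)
Your outline is the paper's proof: rule out primitivity, take a minimal block $B\ni 1$ (normal in $G$, containing $T$), obtain $\Alt(B)\le X_B^B$ from Lemma~\ref{lem:lpsprimitive}\ref{item:lps-nonsimple} when $T<B$, and then count $k$-subsets of $B$ realised as neighbourhood traces against $|G|$.

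Your subcase $Tg\not\subseteq B$ is correct and matches the paper. Here $\Gamma(g^{-1})\cap B=(S\cap Bg)g^{-1}$. Both $S\cap Tg$ and $Tg\setminus S$ are nonempty unions of $T$-classes of nonidentity elements, and each such class has size at least $2$ because $\Cen_G(T)=1$. Hence $2\le k\le |B|-2$, which already settles what you call the main obstacle. No separate small cases such as $B=\Sy_5$ are needed. You only need an explicit bound such as $|\Out(T)|\le|T|/30$ to get $\binom{|B|}{2}>|G|$.

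The genuine gap is the subcase $Tg\subseteq B$, for which neither of your suggestions works.
\begin{itemize}
\item The family $\mathcal F$ can carry no information. If every $S$-partial coset lies in $B$ and $S\setminus B$ is a union of $B$-cosets, then every $\Gamma(w)\cap B$ with $w\notin B$ is $\varnothing$ or $B$.
\item Your alternative family $\{\Gamma(x)\cap Tg\}_{x\in T}=\{Ct\}_{t\in T}$ is not invariant under $X_B^B$. Nothing yet says that $X_B$ preserves $T$ or $Tg$; that is exactly what the lemma asserts. So knowing from Lemma~\ref{lem:rigidity} that this family has $|T|$ distinct members gives no contradiction. The sentence that the $\Alt(B)$-symmetry ``would force far more such sets'' is not an argument.
\end{itemize}

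The paper closes this case in one line. Since $X_B^B\le\Aut(\Gamma[B])$ and $\Alt(B)$ is $2$-transitive on $B$, the graph $\Gamma[B]=\Cay(B,S\cap B)$ must be complete or edgeless. But $S\cap Tg\neq\varnothing$ gives an edge at $1$ inside $B$, and $Tg\setminus S\neq\varnothing$ gives a non-edge at $1$ inside $B$.

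Alternatively, you can run your counting on the genuinely invariant family $\{\Gamma(x)\cap B\}_{x\in B}=\{(S\cap B)x\}_{x\in B}$. It has at most $|B|$ members, each of size $|S\cap B|$ with $2\le|S\cap B|\le|B|-3$. Under $\Alt(B)$ this forces $\binom{|B|}{2}\le|B|$, which is false.
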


\begin{proof}
Let $A=\Aut(\Gamma)$. Since $\overline{P}\neq1$, there exists an $S$-partial coset $Tx\in G/T$ that is distinct from $T$. This implies that $\Gamma$ is not a complete graph, and $G>T$. Then by Lemma~\ref{lem:primitivecase}, $A$ is imprimitive. Choose a minimal nontrivial $A$-block $N$ containing $1$. Lemma~\ref{lem:blocknormal} gives
\[
T\leq N\trianglelefteq G\ \text{ and }\ N<G.
\]
The permutation group $X=A_N^N$ on $N$ induced by the stabilizer $A_N$ is primitive by minimality, and it is straightforward to verify that
\[
L_N(N)R_N(N)\leq X.
\]
If $N=T$, then the lemma already holds. Suppose for a contradiction that $N>T$.
Then Lemma~\ref{lem:lpsprimitive}\ref{item:lps-nonsimple} yields $\Alt(N)\leq X$.
If $Tx\subseteq N$, then since $Tx$ is $S$-partial, the induced graph $\Gamma[N]=\Cay(N,S\cap N)$ is neither empty nor complete. This is excluded by the $2$-transitive group $\Alt(N)\leq X\leq\Aut(\Gamma[N])$.
Consequently, $Tx$ lies outside $N$.

Let $B=S\cap Nx$, $r=|N|$ and $\ell=|B|$.
Then $B$ and $Nx\setminus B$ are nonempty and invariant under conjugation by $N$. Since $\Cen_G(T)=1$, each $N$-conjugacy class in $Nx$ has size at least $2$. Hence $2\leq\ell\leq r-2$.
For $y\in Nx$, as $N=Nx^{-1}y$, the set
\[
\Gamma(y)\cap N=Sy\cap N=(S\cap Nx^{-1})y=B^{-1}y
\]
has size $\ell$. Fix such a vertex $y$. Since $A_N^N\geq\Alt(N)$ is transitive on the set of $\ell$-subsets of $N$, for each $\ell$-subset $C\subseteq N$, there exists $a\in A_N$ such that
\[
C=(\Gamma(y)\cap N)^a=\Gamma(y^a)\cap N.
\]
Thus each $\ell$-subset of $N$ occurs as the neighbourhood in $N$ of some vertex of $\Gamma$. Accordingly,
\[
\binom r\ell\leq|V(\Gamma)|=|G|.
\]
However, $|\Out(T)|\leq |T|/30$ by~\cite[Lemma~2.2]{Quick2004}. This gives a contradiction
\[
|G|=|T||G/T|\leq |T||\Out(T)|\leq\frac{|T|^2}{30}\leq\frac{r^2}{30}<\binom r2\leq\binom r\ell,
\]
as $2\leq\ell\leq r-2$. This completes the proof.
\end{proof}

We next determine the automorphism group when the $S$-partial cosets generate $G/T$.

\begin{lemma}\label{lem:local}
Suppose that $T<G=P$. Then $\Aut(\Gamma)=(R(G)\rtimes\Aut(G,S))\rtimes\langle\iota_G\rangle$.
\end{lemma}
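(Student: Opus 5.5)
The plan is to combine Lemma~\ref{lem:tblocks} with a rigidity argument. This shows that every automorphism fixing $1$ is determined by its action on $T$, and that this action is (up to $\iota$) an automorphism of $T$ which extends to an element of $\Aut(G,S)$. Let $A=\Aut(\Gamma)$. Since $G=P>T$, we have $\overline{P}\neq1$, so by Lemma~\ref{lem:tblocks} the $T$-cosets are blocks of $A$ and $X=A_T^T$ is primitive on $T$. As in the proof of Lemma~\ref{lem:tblocks}, $X\geq L_T(T)R_T(T)$. So Lemma~\ref{lem:lpsprimitive}\ref{item:lps-simple} gives either $\Alt(T)\leq X$ or $X\leq\langle\Hol(T),\iota_T\rangle$.

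\emph{Step 1: exclude $\Alt(T)\leq X$.} There is an $S$-partial coset $Tx\neq T$, and $Tx^{-1}$ is then $S$-partial too. For $y\in Tx$, the set $\Gamma(y)\cap T=(S\cap Tx^{-1})y$ is an $\ell$-subset of $T$ with $2\leq\ell\leq|T|-2$. The bound $\ell\geq2$ holds because $\Cen_G(T)=1$, exactly as before. The counting argument of Lemma~\ref{lem:tblocks} then applies verbatim: every $\ell$-subset of $T$ would be a neighbourhood, so $\binom{|T|}{\ell}\leq|G|\leq|T|^2/30$, a contradiction. Hence $X\leq\langle\Hol(T),\iota_T\rangle$.

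\emph{Step 2: faithfulness.} I claim that an element $a\in A$ fixing $T$ pointwise is trivial. For $y$ in an $S$-partial coset $Tx^{-1}\neq T$ (using $S\cap Tx$ in place of $S\cap Tx^{-1}$ above), the neighbourhood $C_y=\Gamma(y)\cap T$ is a nonempty proper subset of $Ty$ that is invariant under conjugation by $T$. Moreover $C_y$ determines the coset $Ty$, since $C_y\subseteq Ty$. By Lemma~\ref{lem:rigidity}, $C_y$ also determines $y$ within that coset. Since $a$ fixes $T$ pointwise, $C_{y^a}=C_y$, so $a$ fixes $y$. Thus $a$ fixes every coset $Tg$ with $Tg^{-1}$ $S$-partial. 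Translating by $R(G)$ and iterating along products of partial cosets, which generate $G/T$ because $P=G$, we get that $a$ fixes all of $G$. Consequently $A_1$ embeds into $X_1\leq\Aut(T)\langle\iota_T\rangle$.

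\emph{Step 3: identify $A_1$.} Multiplying by $\iota_G$ if necessary, which is legitimate since $\iota_G\in A$ by Lemma~\ref{lem:normalset}, we may assume $a\in A_1$ restricts to some $\alpha\in\Aut(T)$ on $T$. The goal is to show that $a$ is the restriction to $G$ of conjugation by $\alpha$ inside $\Aut(T)$. This requires checking that $\alpha$ normalizes $G\leq\Aut(T)$ and maps $S$ to $S$; then $a\in\Aut(G,S)$, and the lower bound of Lemma~\ref{lem:canonicallower} gives equality. For a vertex $y$ in a partial coset, comparing $C_{y^a}=C_y^\alpha=(S\cap Tx)^\alpha y^\alpha$ with $C_{y^a}=(S\cap Ty^a\,{}^{-1})y^a$ should force $y^a=y^\alpha$, again via Lemma~\ref{lem:rigidity}. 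This gives $Tx^\alpha\subseteq G$ and $(S\cap Tx)^\alpha=S\cap Tx^\alpha$, with the sets being compared lying in $\Aut(T)$.

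\emph{Main obstacle.} I expect Step 3 to be the hardest part, in particular extending the identity $y^a=y^\alpha$ from the partial cosets to all of $G$ and showing that $S$ is preserved on the non-partial cosets as well. For this I would apply the same argument to the elements $R(g)^{-1}aR(g^a)$, which again lie in $A_1$. Their restrictions to $T$ are again automorphisms, so induction along a generating sequence of partial cosets should give $g^a=g^\alpha$ for all $g\in G$. Preservation of $S$ then follows from $a$ preserving $\Gamma(1)=S$.
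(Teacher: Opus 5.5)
Your overall architecture is sound: blocks and primitivity from Lemma~\ref{lem:tblocks}, the dichotomy from Lemma~\ref{lem:lpsprimitive}, the counting exclusion of $\Alt(T)\leq X$, faithfulness, and then identification of $A_1$. However, the way you apply Lemma~\ref{lem:rigidity} in Step~2 is wrong as written, and this step is the crux.

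The set $C_y=\Gamma(y)\cap T=(S\cap Ty^{-1})y$ lies in $T$, not in $Ty$. It is also a right translate of a $T$-conjugation-invariant set, not itself invariant. So ``$C_y$ determines the coset $Ty$ since $C_y\subseteq Ty$'' is false, and you cannot first locate the coset of $y^a$ and then apply rigidity inside it. The missing idea, which is the paper's, avoids cosets altogether.

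\begin{itemize}
\item Suppose $(S\cap Ty^{-1})y=(S\cap Ty'^{-1})y'$. Put $h=y'y^{-1}$ and $B'=S\cap Ty'^{-1}$. Then $B'$ is nonempty and proper in its coset, because $|B'|=|S\cap Ty^{-1}|$.
\item Conjugating by $t\in T$ gives $B'h=B'h^t$. Hence $h^th^{-1}\in T$ stabilizes $B'$.
\item Lemma~\ref{lem:rigidity} then gives $h^t=h$ for all $t\in T$, so $h\in\Cen_G(T)=1$. No knowledge of the coset of $y'$ is needed.
\end{itemize}

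Your Step~3 comparison (``should force $y^a=y^\alpha$, again via Lemma~\ref{lem:rigidity}'') needs exactly this argument. With $h=y^a(y^\alpha)^{-1}\in\Aut(T)$ one concludes $h\in\Cen_{\Aut(T)}(T)=1$. Rigidity ``within a coset'' is unavailable there, since $y^\alpha$ need not even lie in $G$ a priori.

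With that repair your route is valid, but it differs from the paper's after faithfulness. You re-run a twisted version of the rigidity comparison (Step~2 is then the case $\alpha=1$) and propagate coset by coset. Two corrections are needed for that induction:

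\begin{itemize}
\item Use $R(g)aR(g^a)^{-1}$. Your $R(g)^{-1}aR(g^a)$ need not fix $1$.
\item Its restriction to $T$ equals $\alpha$ only because the previous step showed that $a$ agrees with conjugation by $\alpha$ on all of $Tg$. Membership in $\Aut(T)\langle\iota_T\rangle$ alone would not exclude the $\iota$-branch.
\end{itemize}

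The paper instead uses faithfulness as a black box. The map $f^{-1}R_G(t)fR_G(t^\alpha)^{-1}$ fixes $T$ pointwise, so it is trivial, and similarly for $L_G$. Evaluating on $xt^x=tx$ gives $\beta(x^f)=\beta(x)^\alpha$ for all $x\in G$ at once, where $\beta\colon G\to\Aut(T)$ is the conjugation embedding; this is precisely your target $y^a=y^\alpha$. Faithfulness of $\beta$ then makes $f$ a homomorphism, and $S^f=S$ comes for free. This avoids the second induction, any separate treatment of non-partial cosets, and any separate argument that $\alpha$ normalizes $G$. Your version has only the minor merit of producing that normalization explicitly.
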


\begin{proof}
Let $A=\Aut(\Gamma)$. By Lemma~\ref{lem:tblocks}, the $T$-cosets form an $\Aut(\Gamma)$-invariant partition of $G$. As a consequence, $A_1$ stabilizes $T$.

We first prove that $A_1$ acts faithfully on $T$. Let $a\in A_1$ fix $T$ pointwise, let $Tx$ be an $S$-partial coset distinct from $T$, and let $C=S\cap Tx^{-1}$. Since both $S$ and $Tx^{-1}$ are invariant under conjugation by $T$, so is $C$. Suppose that $Ty$ is a $T$-coset that is fixed pointwise by $a$. For $u\in Txy$, consider $v=u^a$. Since $Ty$ is fixed pointwise by $a$,
\[
Cu=\Gamma(u)\cap Ty=(\Gamma(u)\cap Ty)^a=\Gamma(u)^a\cap Ty=\Gamma(v)\cap Ty=(S\cap Tyv^{-1})v.
\]
It follows that $Cuv^{-1}=S\cap Tyv^{-1}$ is invariant under conjugation by $T$, and hence
\[
Cuv^{-1}=(Cuv^{-1})^t=C(uv^{-1})^t
\]
for all $t\in T$. Since $(uv^{-1})^{t}(uv^{-1})^{-1}$ lies in $T$, Lemma~\ref{lem:rigidity} implies that $(uv^{-1})^{t}=uv^{-1}$ for all $t\in T$. Hence $uv^{-1}\in\Cen_G(T)=1$, and so $u=v$. Now we have shown that $a$ fixes $Txy$ pointwise if it fixes $Ty$ pointwise.
Then since $G=P$ is generated by the $S$-partial $T$-cosets other than $T$, it follows that $a$ fixes $G$ pointwise, whence $a=1$. Thus, $A_1$ acts faithfully on $T$, as required.

Let $X=A_T^T$. It is primitive by Lemma~\ref{lem:tblocks}. Since it contains $L_T(T)R_T(T)$, Lemma~\ref{lem:lpsprimitive}\ref{item:lps-simple} gives either $\Alt(T)\leq X$ or $X\leq\langle\Hol(T),\iota_T\rangle$. The first possibility is excluded by the argument in the second paragraph (with $N=T$) in the proof of Lemma~\ref{lem:tblocks}. Thus
\begin{equation}\label{eq:restriction}
X\leq\langle\Hol(T),\iota_T\rangle.
\end{equation}

Let $f\in A_1$. To prove the lemma, it suffices to prove that $f\in\Aut(G,S)\langle\iota_G\rangle$. Since $f$ fixes $1$, the restriction $f|_T$ lies in the stabilizer $X_1$. Hence~\eqref{eq:restriction} in conjunction with~\eqref{eq:holiota} implies $f|_T\in\Aut(T)\langle\iota_T\rangle$. Thus, after replacing $f$ by $f\iota_G$ if necessary, we may assume
\[
f|_T=\alpha\in\Aut(T).
\]
Let $t\in T$ and $b=f^{-1}R_G(t)fR_G(t^\alpha)^{-1}\in A$. Then $b$ fixes $1$, and for each $s\in T$,
\[
s^b=\bigl(s^{\alpha^{-1}}t\bigr)^{\alpha}(t^\alpha)^{-1}=\bigl(s^{\alpha^{-1}}\bigr)^\alpha t^\alpha(t^\alpha)^{-1}=s.
\]
Since $A_1$ acts faithfully on $T$, it follows that $b=1$, or equivalently, $R_G(t)f=fR_G(t^\alpha)$. The same computation applies with $L_G$ in place of $R_G$, as $L_G(T)\leq A$ by Lemma~\ref{lem:normalset}. Now
\[
R_G(t^x)f=fR_G\bigl((t^x)^\alpha\bigr)\ \text{ and }\ L_G(t)f=fL_G(t^\alpha)
\]
for all $x\in G$ and $t\in T$. Applying this to the two sides of the equality $(xt^x)^f=(tx)^f$ gives
\begin{equation}\label{eq:semilinear}
x^f(t^x)^\alpha=t^\alpha x^f.
\end{equation}
For $g\in G$, let $\beta(g)\in\Aut(T)$ be defined by $t^{\beta(g)}=t^g$ for all $t\in T$. Then $\beta\colon G\to\Aut(T)$ is a faithful homomorphism, as $\Cen_G(T)=1$, and~\eqref{eq:semilinear} means $\beta(x)\alpha=\alpha\beta(x^f)$, or equivalently,
\[
\beta(x^f)=\beta(x)^{\alpha}.
\]
It follows that, for all $x,y\in G$,
\[
\beta\bigl((xy)^f\bigr)=\beta(xy)^{\alpha}=\beta(x)^\alpha\beta(y)^\alpha=\beta(x^f)\beta(y^f)=\beta\bigl(x^fy^f\bigr),
\]
and so $(xy)^f=x^fy^f$ by the faithfulness of $\beta$. This shows that $f\in\Aut(G)$. Finally, we derive from $f\in A_1$ that $S^f=\Gamma(1)^f=\Gamma(1)=S$. Hence $f\in\Aut(G,S)$, completing the proof.
\end{proof}

\subsection{General case}

We now use the analysis of the special cases in the preceding subsection to prove Theorem~\ref{thm:main} in general. The key is the set $D$.

\begin{lemma}\label{lem:dcosets}
The set $D$ forms a normal subgroup of $G$ with $S\cap D=D\setminus\{1\}$ or $\varnothing$, the $D$-cosets form an $\Aut(\Gamma)$-invariant partition of $G$, and the bipartite graph between each pair of distinct $D$-cosets is either complete bipartite or empty. Moreover, either $D=1$, or $\overline{P}=1$ and $D=H=K$.
\end{lemma}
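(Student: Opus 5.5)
Throughout, $\Gamma(x)=Sx$. The plan is to realise $D$ as a class of a graph-theoretic equivalence relation. That relation is automatically $\Aut(\Gamma)$-invariant, and its classes are then forced to be cosets of a normal subgroup by Lemma~\ref{lem:blocknormal}.

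\textbf{The twin relation.} Define $x\approx y$ if and only if $\Gamma(x)\setminus\{y\}=\Gamma(y)\setminus\{x\}$. First I will check the standard fact that $\approx$ is an equivalence relation. Reflexivity and symmetry are clear. For transitivity, take $x\approx y$ and $y\approx z$ with $x,y,z$ distinct, and compare $\Gamma(x)$ and $\Gamma(z)$ away from $\{x,y,z\}$. For $w\notin\{x,y,z\}$ we have $w\in\Gamma(x)\Leftrightarrow w\in\Gamma(y)\Leftrightarrow w\in\Gamma(z)$, so it remains to handle $y$. By symmetry of adjacency,
\[
y\in\Gamma(x)\Leftrightarrow x\in\Gamma(y)\Leftrightarrow x\in\Gamma(z)\Leftrightarrow z\in\Gamma(x)\Leftrightarrow z\in\Gamma(y)\Leftrightarrow y\in\Gamma(z),
\]
using $y\approx z$ at $x$, then $x\approx y$ at $z$, then $y\approx z$ at $x$ again. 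This chain also shows that inside a class, adjacency is constant: either all pairs are adjacent or none are.

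\textbf{$D$ as a normal subgroup and a block.} Since $\approx$ is defined purely from the graph, its classes form an $\Aut(\Gamma)$-invariant partition of $G$. The class of $1$ consists of those $g$ with $S\setminus\{g\}=Sg\setminus\{1\}$, which is exactly $D$. As $R(G)$ and $\Inn(G)$ (Lemma~\ref{lem:normalset}) lie in $\Aut(\Gamma)$, Lemma~\ref{lem:blocknormal} shows that $D\trianglelefteq G$. Because $R(G)$ acts regularly, the classes are the cosets $Dg$.

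The constancy of adjacency inside a class gives $S\cap D=D\setminus\{1\}$ or $\varnothing$. For distinct classes $Dx\ne Dy$, take $x'\approx x$ and $y'\approx y$. Then $y\in\Gamma(x)\Leftrightarrow y\in\Gamma(x')$ and $x'\in\Gamma(y)\Leftrightarrow x'\in\Gamma(y')$. Hence the bipartite graph between $Dx$ and $Dy$ is complete or empty. Applying this with $x=1$, every $S\cap Dg$ with $g\notin D$ is either $\varnothing$ or $Dg$; that is, $S\setminus D$ is a union of $D$-cosets.

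\textbf{The dichotomy.} Suppose $D\ne1$. Since $G$ is almost simple and $D\trianglelefteq G$, we get $T\le D$, so $K=TD=D$. Now take a $T$-coset $Tg\ne T$.
\begin{itemize}
\item If $g\notin D$, then $Tg\subseteq Dg$, and $S\cap Dg\in\{\varnothing,Dg\}$. So $Tg$ is not $S$-partial.
\item If $g\in D\setminus T$, then $1\notin Tg$ and $S\cap D\in\{\varnothing,D\setminus\{1\}\}$. So $Tg$ lies entirely inside or entirely outside $S$, and again it is not $S$-partial.
\end{itemize}
The coset $T$ itself may be $S$-partial, but it is the identity of $G/T$ and contributes nothing to $\overline P$. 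Therefore $\overline P=1$, so $P=T$, and then $H=PD=TD=D=K$. The only delicate point is the transitivity and constant-adjacency argument for $\approx$, which needs the three-way chain above. Everything else is bookkeeping from the cited lemmas.
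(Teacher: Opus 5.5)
Your proof is correct and follows essentially the same route as the paper. Both arguments take the classes of the twin relation $\Gamma(x)\setminus\{y\}=\Gamma(y)\setminus\{x\}$ as an $\Aut(\Gamma)$-invariant partition, use Lemma~\ref{lem:blocknormal} to make $D$ a normal subgroup, and split the $T$-cosets into those inside and outside $D$ for the dichotomy. The only difference is how transitivity is shown: the paper obtains it by identifying $x\approx y$ with the transposition $(x,y)$ lying in $\Aut(\Gamma)$ (conjugating $(x,y)$ by $(y,z)$ gives $(x,z)$), whereas you verify it directly through your adjacency chain. One small correction there: the last step of your chain is symmetry of adjacency, not a second use of $y\approx z$.
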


\begin{proof}
Let $A=\Aut(\Gamma)$, and for each $x\in G$ let
\[
D(x)=\{x\}\cup\{y\in G\setminus\{x\}\mid A\text{ contains the transposition }(x,y)\}.
\]
For pairwise distinct $x,y,z\in G$, conjugating the transposition $(x,y)$ by $(y,z)$ gives $(x,z)$. As a consequence, $y\in D(x)$ implies $D(x)=D(y)$. Thus, for all $x,y\in G$,
\begin{equation}\label{eq:equivalence}
D(x)=D(y)\ \Leftrightarrow\ y\in D(x)\ \Leftrightarrow\ \Gamma(x)\setminus\{y\}=\Gamma(y)\setminus\{x\}.
\end{equation}
This defines an equivalence relation on $G$, whose classes are the sets $D(x)$ and give an $A$-invariant partition.
It also follows from~\eqref{eq:equivalence} that
\[
D(x)=\{y\in G\mid\Gamma(x)\setminus\{y\}=\Gamma(y)\setminus\{x\}\}
\]
for all $x\in G$. Then the definition of $D$ gives $D=D(1)$ and $gD=D(g)$ for all $g\in G$, and Lemma~\ref{lem:blocknormal} yields $D\trianglelefteq G$. Hence the classes of the above equivalence relation are the $D$-cosets.
Moreover, vertices in the same class have identical neighbours outside that class, and each class induces a complete or an empty graph. Accordingly, the bipartite graph between two distinct $D$-cosets is either complete bipartite or empty, and $S\cap D=D\setminus\{1\}$ or $\varnothing$.

Finally, suppose that $D\neq1$. Since $D$ is a normal subgroup of the almost simple group $G$, it follows that $T\leq D$. For each $T$-coset $Tg$ that is distinct from $T$, it is contained either in $D\setminus\{1\}$ or in the $D$-coset $Dg$ distinct from $D$. In the first case, $Tg$ is contained in $S$ or disjoint from $S$, according to $S\cap D=D\setminus\{1\}$ or $\varnothing$. In the second case, since $\Gamma(1)=S$, while $1\in D$ is either adjacent to every vertex of $Dg$ or adjacent to none of them, we again have $Tg$ contained in $S$ or disjoint from $S$. Hence there exist no $S$-partial $T$-cosets distinct from $T$, which means that $\overline{P}=1$. Consequently, $P=T$, and $K=TD=D=PD=H$, completing the proof.
\end{proof}

\begin{lemma}\label{lem:khblocks}
Theorem~$\ref{thm:main}$\ref{item:main-blocks} holds, and $S\setminus H$ is a union of $K$-cosets.
\end{lemma}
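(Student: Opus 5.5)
The plan is to split according to the dichotomy in Lemma~\ref{lem:dcosets}: either $D\neq1$, in which case $\overline{P}=1$ and $D=H=K$, or $D=1$, in which case $H=P$ and $K=T$.

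\emph{Case $D\neq1$.} Here $H=K=D$, and Lemma~\ref{lem:dcosets} already shows that the $D$-cosets form an $\Aut(\Gamma)$-invariant partition. That lemma also shows that the vertex $1\in D$ is adjacent either to every vertex of a given $D$-coset $Dg\neq D$ or to none of them. Since $\Gamma(1)=S$, the set $S\setminus D$ is a union of $D$-cosets, which settles this case.

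\emph{Case $D=1$ and $\overline{P}=1$.} Here $H=K=T$. Every $T$-coset other than $T$ lies entirely inside $S$ or entirely outside $S$, so $S\setminus T$ is a union of $T$-cosets. Consequently, vertices in the same $T$-coset have identical neighbourhoods outside that coset.
\begin{itemize}
\item If $S\cap T=\varnothing$ or $S\cap T=T\setminus\{1\}$, then any two vertices $x,y$ of a $T$-coset satisfy $\Gamma(x)\setminus\{y\}=\Gamma(y)\setminus\{x\}$. This gives $T\leq D$, contradicting $D=1$.
\item Hence $S\cap T$ is a nonempty proper subset of $T\setminus\{1\}$ that is normal in $T$, and $(T\setminus\{1\})\setminus S$ is also nonempty and normal in $T$.
\item By Lemma~\ref{lem:simplecase}, both $\Gamma[T]=\Cay(T,S\cap T)$ and its complement are connected.
\item Since $S\setminus T$ is a union of $T$-cosets, $\Gamma$ is isomorphic to the lexicographic product $(\Gamma/T)\circ\Gamma[T]$, with the $T$-cosets as fibres.
\item Lemma~\ref{lem:sabidussi} then gives $\Aut(\Gamma)=\Aut(\Gamma[T])\wr\Aut(\Gamma/T)$, which preserves the $T$-coset partition.
\end{itemize}

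\emph{Case $D=1$ and $\overline{P}\neq1$.} Here $K=T$ and $H=P$.
\begin{itemize}
\item Lemma~\ref{lem:tblocks} shows that the $T$-cosets form an $\Aut(\Gamma)$-invariant partition.
\item Every $S$-partial $T$-coset lies in $P$, so each $T$-coset outside $P$ is contained in $S$ or disjoint from $S$. Hence $S\setminus P$ is a union of $K$-cosets.
\item For the $P$-cosets, classify each pair of distinct $T$-cosets $Tu\neq Tv$ by the bipartite graph between them: it is complete, empty, or \emph{partial} (neither). This type is preserved by $\Aut(\Gamma)$, because $\Aut(\Gamma)$ permutes the $T$-cosets and preserves edges.
\item Since $S$ is normal, the bipartite graph between $Tu$ and $Tv$ is partial exactly when $Tvu^{-1}$ is an $S$-partial coset. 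So the graph on $G/T$ formed by the partial pairs is the Cayley graph of $G/T$ whose connection set is the set of $S$-partial cosets other than $T$.
\item The connected components of this graph are the cosets of $\overline{P}$, that is, the $P$-cosets in $G$. Including or excluding $T$ does not change the subgroup generated, since $T$ is the identity of $G/T$.
\item As $\Aut(\Gamma)$ preserves this partial-pair graph, it permutes its components, so the $P$-cosets form an $\Aut(\Gamma)$-invariant partition.
\end{itemize}

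The only delicate point is the case $D=1$, $\overline{P}=1$. There one has to see that the $T$-cosets are blocks even though Lemma~\ref{lem:tblocks} does not apply. This requires excluding the degenerate possibilities for $S\cap T$ via $D=1$, so that Sabidussi's theorem applies.
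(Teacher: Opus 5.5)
Your proposal is correct and follows essentially the same route as the paper: Lemma~\ref{lem:dcosets} handles $D\neq1$; when $D=1$, the $T$-cosets are blocks by Lemma~\ref{lem:tblocks} (if $\overline{P}\neq1$) or by the lexicographic-product decomposition with Lemma~\ref{lem:sabidussi} (if $\overline{P}=1$); and the $P$-cosets are the components of the Cayley graph on $G/T$ given by the partial pairs. The only cosmetic difference is that the paper treats the $P$-cosets in one uniform paragraph, which is trivial when $\overline{P}=1$, instead of splitting that case off as you do.
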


\begin{proof}
If $D\neq1$, then $K=H=D$ by Lemma~\ref{lem:dcosets}, and the assertion follows from the same lemma. Now assume $D=1$, so that $H=PD=P$ and $K=TD=T$. By the definition of $P$, no $T$-coset outside $P$ is $S$-partial. This implies that $S\setminus H$ is a union of $K$-cosets.

Towards Theorem~\ref{thm:main}\ref{item:main-blocks}, we first prove that the $K$-cosets form an $\Aut(\Gamma)$-invariant partition. This is stated in Lemma~\ref{lem:tblocks} if $\overline{P}\neq1$, as $K=T$. Assume for the rest of this paragraph that $\overline{P}=1$. Then
\begin{equation}\label{eq:lexproof}
\Gamma\cong(\Gamma/T)\circ\Gamma[T].
\end{equation}
If $S\cap T$ is either $T\setminus\{1\}$ or $\varnothing$, then~\eqref{eq:lexproof} shows that all vertices of $T$ are equivalent under the relation defined by~\eqref{eq:equivalence}, which means that $T\subseteq D$, contradicting $D=1$. Therefore,
\[
\varnothing\subsetneq S\cap T\subsetneq T\setminus\{1\}.
\]
Now both $S\cap T$ and $(T\setminus\{1\})\setminus S$ are nonempty normal subsets of the simple group $T$. They both generate $T$, and so both $\Gamma[T]$ and its complement are connected. Then Lemma~\ref{lem:sabidussi} applied to~\eqref{eq:lexproof} shows that the $T$-cosets form an $\Aut(\Gamma)$-invariant partition.

As $H=P$, it remains to prove that the $P$-cosets also form an $\Aut(\Gamma)$-invariant partition. On $G/T$, join two distinct $T$-cosets when the bipartite graph between them in $\Gamma$ is neither complete bipartite nor empty. We obtain a Cayley graph on $G/T$ with connection set consisting of the $S$-partial cosets other than $T$. Since the partition of $T$-cosets is $\Aut(\Gamma)$-invariant, this Cayley graph is also $\Aut(\Gamma)$-invariant. Its connected components are the $\overline{P}$-cosets, whose unions in $G$ are the $P$-cosets. Thus the $P$-cosets form an $\Aut(\Gamma)$-invariant partition.
\end{proof}

\begin{lemma}\label{lem:unifiedlocal}
Theorem~$\ref{thm:main}$\ref{item:main-local} holds.
\end{lemma}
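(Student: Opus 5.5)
We split according to whether $D$ is trivial.

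\emph{Case $D\neq1$.} By Lemma~\ref{lem:dcosets} we have $H=D$ and $S\cap D$ is either $D\setminus\{1\}$ or $\varnothing$. Hence $\Gamma[H]=\Cay(H,S\cap H)$ is complete or empty, and in either case $\Aut(\Gamma[H])=\Sym(H)$.

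\emph{Case $D=1$.} Here $H=P$, and $H$ is almost simple with socle $T$. The subset $S\cap H$ is normal in $H$, being the intersection of a normal subset of $G$ with a normal subgroup. Our aim is to apply earlier results to $\Gamma[H]=\Cay(H,S\cap H)$ in place of $\Gamma$, after checking their hypotheses.

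We first check that $\Gamma[H]$ is connected. The subgroup $\langle S\cap H\rangle$ is normal in $H$. If $\overline P\neq1$, it contains elements of $S$-partial cosets $Tx\neq T$, so it is nontrivial and therefore contains $T$. It then contains every $S$-partial coset, which generate $\overline P$, and so it equals $H$. If $\overline P=1$, then $H=T$. The proof of Lemma~\ref{lem:khblocks} shows, using $D=1$, that $\varnothing\neq S\cap T\subsetneq T\setminus\{1\}$.

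\emph{Subcase $\overline P=1$.} Lemma~\ref{lem:simplecase}(a) applied to $\Cay(T,S\cap T)$ gives $\Aut(\Gamma[T])=(R(T)\rtimes\Aut(T,S\cap T))\rtimes\langle\iota_T\rangle$, as required.

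\emph{Subcase $\overline P\neq1$.} We want to apply Lemma~\ref{lem:local} to $\Cay(H,S\cap H)$. This needs $T<H$ and that the $(S\cap H)$-partial $T$-cosets of $H$ generate $H/T$. For $g\in H$ we have $(S\cap H)\cap Tg=S\cap Tg$. So the partial cosets relative to $S\cap H$ within $H$ are exactly the $S$-partial cosets, all of which lie in $P=H$. Hence they generate $H/T$. Lemma~\ref{lem:local} then yields
\[
\Aut(\Gamma[H])=(R_H(H)\rtimes\Aut(H,S\cap H))\rtimes\langle\iota_H\rangle.
\]

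The main obstacle is verifying the hypotheses of Lemma~\ref{lem:local} for the subgraph. That lemma relies, through Lemma~\ref{lem:tblocks}, on the connectedness of $\Gamma[H]$ and on its non-completeness, so these are what must be checked. Non-completeness holds because some $S$-partial coset $Tx\neq T$ lies in $H$. Connectedness was established above.
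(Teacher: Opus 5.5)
Your proposal is correct and follows essentially the same route as the paper: Lemma~\ref{lem:dcosets} handles $D\neq1$, the case $D=1$, $\overline{P}=1$ uses the second paragraph of the proof of Lemma~\ref{lem:khblocks} together with Lemma~\ref{lem:simplecase}, and the case $\overline{P}\neq1$ shows $\langle S\cap P\rangle=P$ and then applies Lemma~\ref{lem:local} to $\Gamma[P]$. Your explicit check that the $(S\cap H)$-partial $T$-cosets of $H$ coincide with the $S$-partial ones is a detail the paper leaves implicit.
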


\begin{proof}
If $D\neq1$, then Lemma~\ref{lem:dcosets} gives $K=H=D$ and $S\cap H=H\setminus\{1\}$ or $\varnothing$. In this case, $\Gamma[H]$ is complete or empty, whence
\[
\Aut(\Gamma[H])=\Sym(H).
\]
Assume for the remainder of the proof that $D=1$. Then $K=TD=T$ and $H=PD=P$.

If $\overline{P}=1$, then $H=P=T$, and it is shown in the second paragraph in the proof of Lemma~\ref{lem:khblocks} that $\varnothing\subsetneq S\cap T\subsetneq T\setminus\{1\}$. In this case, Lemma~\ref{lem:simplecase} gives
\begin{equation}\label{eq:Gamma[H]}
\Aut(\Gamma[H])=(R_H(H)\rtimes\Aut(H,S\cap H))\rtimes\langle\iota_H\rangle.
\end{equation}
Next assume that $\overline{P}\neq1$. The $S$-partial $T$-cosets generate $P/T$. Let $N=\langle S\cap P\rangle$. Then $1\neq N\trianglelefteq P$. Since $P$ is almost simple with socle $T$, it follows that $T\leq N$, and so the generation of $P/T$ leads to $N=P$. This means that $\Gamma[P]=\Cay(P,S\cap P)$ is connected, and hence Lemma~\ref{lem:local} applied to this graph again gives~\eqref{eq:Gamma[H]}.
\end{proof}

\begin{proof}[\rm\textbf{Proof of Theorem~\ref{thm:main}}]
Since $S$ is normal in $G$, conjugation in $G/T$ preserves the set of $S$-partial cosets. Hence $\overline{P}\trianglelefteq G/T$, and so $P\trianglelefteq G$. Moreover, Lemma~\ref{lem:dcosets} shows that $D\trianglelefteq G$. Thus, $H=PD$ and $K=TD$ are normal in $G$ with $K\leq H$. Finally, parts~\ref{item:main-blocks} and~\ref{item:main-local} are Lemmas~\ref{lem:khblocks} and~\ref{lem:unifiedlocal}, respectively, and part~\ref{item:main-intersection} follows from Lemma~\ref{lem:gwintersection}, whose hypothesis is supplied by Lemma~\ref{lem:khblocks}.
\end{proof}

\section{Applications}\label{sec:applications}

As applications of Theorem~\ref{thm:main}, we first prove Corollary~\ref{cor:criterion} and Corollary~\ref{cor:canonicalcriterion}, and then study the existence of GDRRs for nonabelian simple groups, with alternating groups as the principal application.

\subsection{Characterizations}\label{subsec:characterizations}

\begin{proof}[\rm\textbf{Proof of Corollary~\ref{cor:criterion}}]
If~\ref{item:as-normal} holds, then in the notation of Theorem~\ref{thm:main}, we derive from $T\leq P$ and $\overline{P}=G/T$ that $P=G$, which implies $H=PD=G$ and hence~\ref{item:as-canonical} by Theorem~\ref{thm:main}.
Moreover, Lemma~\ref{lem:normalset} gives the equivalence of~\ref{item:as-normal}--\ref{item:as-LRG}, while~\ref{item:as-canonical} clearly implies~\ref{item:as-iota}. Thus~\ref{item:as-normal}--\ref{item:as-canonical} are equivalent.

Let $A=\Aut(\Gamma)$ and $M=L(T)R(T)=L(T)\times R(T)$. We first observe that
\[
\Cen_{\Sym(G)}(M)=1.
\]
Indeed, $M_1=\Inn_G(T)$ fixes only $1$ in $G$. Since $R(G)$ normalizes $M$ and is transitive on $G$, it follows that, for each $g\in G$, the stabilizer $M_g$ fixes only $g$. Thus, a permutation of $G$ centralizing $M$ must fix every $g\in G$.

Suppose that~\ref{item:as-canonical} holds. If $A=\Sym(G)$, then $A$ is primitive, and $(G,\Soc(A),\Soc(A)_1)$ does not occur in Table~\ref{tab:exceptions}. If $A=(R(G)\rtimes\Aut(G,S))\rtimes\langle\iota_G\rangle$, then since conjugation by $\iota_G$ interchanges $L(T)$ and $R(T)$, we deduce that $M$ is a minimal normal subgroup of $A$. In either case,~\ref{item:as-minimal} holds.

Conversely, suppose that~\ref{item:as-minimal} holds.
First assume that $M$ is a minimal normal subgroup of $A$. Let $N=\Nor_{\Sym(G)}(M)$, so that $A\leq N$, and let $f$ be an element in the stabilizer of $1$ in $\Cen_N(L(T))$. Then $f$ fixes $T=1^{L(T)}$ pointwise. Since $M$ is normal in $A$, conjugation by $f$ preserves $\{L(T),R(T)\}$. It follows that $f$ normalizes $R(T)$, as it fixes $L(T)$. For each $t\in T$, the permutations $R(t)^f$ and $R(t)$ therefore belong to $R(T)$ and agree on $T$, and hence are equal. This leads to $f\in\Cen_{\Sym(G)}(M)=1$. Since $R(G)$ is a transitive subgroup of $\Cen_N(L(T))$, the Frattini argument for permutation groups then gives $\Cen_N(L(T))=R(G)$. Similarly, $\Cen_N(R(T))=L(G)$.
Note that the minimal normality of $M$ in $A$ forces some $a\in A$ to interchange $L(T)$ and $R(T)$. As a consequence,
\[
L(G)=\Cen_N(R(T))=\Cen_N(L(T))^a=R(G)^a\leq A,
\]
proving~\ref{item:as-LG}.

Now assume that $A$ is primitive and $(G,\Soc(A),\Soc(A)_1)$ does not occur in Table~\ref{tab:exceptions}. Then by~\cite[Theorem~1.4]{LPS2010}, either $A\geq\Alt(G)$, or $G=T$ and $M\leq A^\sigma\leq\Hol(T)\langle\iota_T\rangle$ for some $\sigma\in\Sym(T)$. In the former case, the $2$-transitivity of $A$ on $G$ implies that $\Gamma$ is complete, and so $S=G\setminus\{1\}$ is normal, as in~\ref{item:as-normal}. In the latter case, the regular subgroup $R(T)^\sigma$ is either $R(T)$ or $L(T)$ by Lemma~\ref{lem:regularsimple}. In this case, either $\sigma$ or $\sigma\iota_T$ normalizes $R(T)$, and hence $\sigma\in\Hol(T)\langle\iota_T\rangle$ normalizes $M$. This implies that $M\leq A$, and in particular $L(T)\leq A$, as in~\ref{item:as-LG}.
\end{proof}

\begin{proof}[\rm\textbf{Proof of Corollary~\ref{cor:canonicalcriterion}}]
If~\ref{item:canonical-local} holds, then $D=1$ rules out the possibility for $\Gamma$ to be complete and leads to $G=H=PD=P$. Hence~\ref{item:canonical-local}$\Rightarrow$\ref{item:canonical-generation}. Moreover,~\ref{item:canonical-generation}$\Rightarrow$\ref{item:canonical-equality} by Corollary~\ref{cor:criterion}.
To complete the proof, we suppose~\ref{item:canonical-equality} and deduce~\ref{item:canonical-local}; that is, we suppose
\begin{equation}\label{eq:canonical-equality}
\Aut(\Gamma)=(R_G(G)\rtimes\Aut(G,S))\rtimes\langle\iota_G\rangle
\end{equation}
and prove $H=G$ and $D=1$.

First, suppose for a contradiction that $H<G$. Fix any $t\in T\setminus\{1\}$ and define $\eta\in\Sym(G)$ to act as conjugation by $t$ on $H$ and fix $G\setminus H$ pointwise. Then the restriction of $\eta$ to $H$ lies in $\Aut(H,S\cap H)\leq\Aut(\Gamma[H])$, while its restriction to every other $H$-coset is the identity. Moreover, since $t\in T\leq K$ and $K\trianglelefteq G$, conjugation by $t$ fixes every $K$-coset in $H/K$, whence $\eta$ induces the identity on $G/K$. Thus, $\eta$ lies in both the wreath products in Theorem~\ref{thm:main}\ref{item:main-intersection}, and hence lies in $\Aut(\Gamma)$.
Since $\eta$ fixes $1$, it follows from~\eqref{eq:canonical-equality} that
\[
\eta\in\Aut(G,S)\rtimes\langle\iota_G\rangle.
\]
As the restriction of $\eta$ on $H$ is conjugation by $t$, we have $\eta\notin\Aut(G,S)\iota_G$. Hence $\eta\in\Aut(G,S)$. Fix any $x\in G\setminus H$. Then for each $h\in H$,
\[
h^\eta=\bigl((hx)x^{-1}\bigr)^\eta=(hx)^\eta(x^{-1})^\eta.
\]
However, since $hx$ and $x^{-1}$ are both in $G\setminus H$, they are both fixed by $\eta$. It follows that $h^\eta=(hx)(x^{-1})=h$.
This shows that $\eta$ fixes $H$ pointwise, contradicting $t\neq1$.

Now we obtain $H=G$. Suppose that $D\neq1$. Then Lemma~\ref{lem:dcosets} gives $D=H=G$, which means that $\Gamma$ is complete or empty. Since $\Gamma$ is connected, it must be complete, and so $\Aut(\Gamma)=\Sym(G)$. This contradicts~\eqref{eq:canonical-equality}, completing the proof.
\end{proof}

\subsection{Existence of GDRRs}\label{subsec:gdrr-existence}

Throughout this subsection, for a finite group $T$, let
\begin{equation}\label{eq:Omega}
\Omega_T=\{C\cup C^{-1}: C\text{ is a non-identity conjugacy class of }T\}.
\end{equation}
Since $\Inn(T)$ acts trivially on $\Omega_T$, the group $\Out(T)$ acts on $\Omega_T$.

\begin{theorem}\label{thm:regularsubset}
Let $T$ be a finite nonabelian simple group, let $\Omega_T$ be defined by~\eqref{eq:Omega}, and let $E$ be the kernel of $\Out(T)$ on $\Omega_T$. Then the following statements hold:
\begin{enumerate}[label=\textup{(\alph*)},leftmargin=*]
\item\label{item:gdrr-kernel} $E$ is an elementary abelian $2$-group.
\item\label{item:gdrr-subset} $T$ admits a GDRR if and only if there exists a subset of $\Omega_T$ whose stabilizer in $\Out(T)$ is trivial.
\item\label{item:gdrr-arc} $T$ admits an arc-transitive GDRR if and only if there exists a member of $\Omega_T$ whose stabilizer in $\Out(T)$ is trivial.
\item\label{item:gdrr-abelian} For $T$ such that $\Out(T)$ is abelian, $T$ admits a GDRR if and only if $E=1$.
\end{enumerate}
\end{theorem}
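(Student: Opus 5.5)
Everything will be reduced to Lemma~\ref{lem:simplecase}. A GDRR $\Gamma=\Cay(T,S)$ satisfies $\Aut(\Gamma)=L(T)R(T)\langle\iota\rangle\geq L(T)R(T)\langle\iota\rangle$, so $S$ is normal by Lemma~\ref{lem:normalset}. Moreover $S\neq\varnothing$ and $S\neq T\setminus\{1\}$, since otherwise $\Aut(\Gamma)=\Sym(T)$, which is too large. Lemma~\ref{lem:simplecase} then gives $\Aut(\Gamma)=(R(T)\rtimes\Aut(T,S))\rtimes\langle\iota\rangle$. Comparing orders with $L(T)R(T)\langle\iota\rangle=(R(T)\rtimes\Inn(T))\rtimes\langle\iota\rangle$, we see that $\Gamma$ is a GDRR exactly when $\Aut(T,S)=\Inn(T)$. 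Conversely, suppose $S$ is a nonempty proper inverse-closed normal subset of $T\setminus\{1\}$ with $\Aut(T,S)=\Inn(T)$. Then Lemma~\ref{lem:simplecase} shows that $\Cay(T,S)$ is a GDRR.

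The inverse-closed normal subsets of $T\setminus\{1\}$ are exactly the unions of members of $\Omega_T$, and $\Aut(T,S)/\Inn(T)$ is the stabilizer in $\Out(T)$ of the corresponding subset of $\Omega_T$. This proves~\ref{item:gdrr-subset}, once we handle the two extreme subsets. The empty subset and the full set $\Omega_T$ are both fixed by all of $\Out(T)$. So if either of them has trivial stabilizer, then $\Out(T)=1$, and any single member of $\Omega_T$ gives a valid $S$ (using $|\Omega_T|\geq2$ for nonabelian simple $T$). For~\ref{item:gdrr-arc}, $\Aut(\Gamma)$ is arc-transitive exactly when $\Aut(\Gamma)_1=\Aut(T,S)\langle\iota\rangle$ is transitive on $S$. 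Since $\iota$ maps each class $C$ to $C^{-1}$, this happens exactly when $\Aut(T,S)$ acts transitively on the set of classes $C\cup C^{-1}$ contained in $S$. If $\Aut(T,S)=\Inn(T)$, which acts trivially on $\Omega_T$, this means $S$ is a single member of $\Omega_T$. Conversely, a single member of $\Omega_T$ with trivial stabilizer gives an arc-transitive GDRR.

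For~\ref{item:gdrr-kernel}, I would use the fact that an automorphism acting on conjugacy classes preserves each $C\cup C^{-1}$, so it either fixes $C$ or sends it to $C^{-1}$. Take $\alpha$ representing an element of $E$. Then $\alpha^2$ fixes every conjugacy class of $T$, so it is a class-preserving automorphism. The main obstacle is showing that a class-preserving automorphism of a nonabelian simple group is inner. This follows from the CFSG-based result of Feit and Seitz, which I would cite. It gives $E^2=1$, so $E$ is elementary abelian of exponent $2$.

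For~\ref{item:gdrr-abelian}, suppose $\Out(T)$ is abelian. Then the stabilizer of any subset of $\Omega_T$ contains the kernel $E$. So if $E\neq1$, no subset has trivial stabilizer, and by~\ref{item:gdrr-subset} there is no GDRR. Conversely, suppose $E=1$, so that $\Out(T)$ acts faithfully on $\Omega_T$. For an abelian group acting faithfully, the point stabilizers $\Out(T)_\omega$ are normal, and their intersection is $E=1$. Since $\Out(T)$ is abelian, the stabilizer of the orbit of $\omega$ equals $\Out(T)_\omega$. To get a subset with trivial stabilizer, I would build one orbit by orbit, so that its stabilizer is the intersection of the stabilizers of its pieces. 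From each orbit I would choose a subset whose setwise stabilizer in the regular action of $\Out(T)/\Out(T)_\omega$ on that orbit is trivial; a single point works, with stabilizer $\Out(T)_\omega$. I would also need to make sure that distinct orbits cannot be swapped, which is automatic since elements of $\Out(T)$ preserve each orbit. Taking the union of one point from every orbit, the stabilizer is $\bigcap_\omega\Out(T)_\omega=E=1$, and~\ref{item:gdrr-subset} gives the GDRR.
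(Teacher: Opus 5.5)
Your proposal is correct and follows essentially the paper's proof: Feit--Seitz for (a), the identification of $\Aut(T,S)/\Inn(T)$ with the stabilizer in $\Out(T)$ of the corresponding subset of $\Omega_T$ for (b) and (c), and choosing one point from each orbit so that the stabilizer becomes an intersection of orbit kernels for (d). The differences are only cosmetic. You invoke Lemma~\ref{lem:simplecase} directly where the paper cites Corollary~\ref{cor:criterion}, and in (d) your phrase ``the stabilizer of the orbit of $\omega$'' should read ``the kernel of the action on the orbit of $\omega$''.
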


\begin{proof}
By Feit--Seitz~\cite[Theorem~C]{FS1989}, every class-preserving automorphism of $T$ is inner. Let $e\in E$, and write $e=\Inn(T)\alpha$ with $\alpha\in\Aut(T)$. For each conjugacy class $C$ of $T$, either $C^\alpha=C$ or $C^\alpha=C^{-1}$. Consequently, $\alpha^2$ fixes every conjugacy class of $T$ and is therefore inner. Hence $e^2=1$, proving statement~\ref{item:gdrr-kernel}.

Suppose that $\Gamma=\Cay(T,S)$ is a GDRR; that is, $\Aut(\Gamma)=L(T)R(T)\langle\iota\rangle$. Then $S$ is normal in $T$ by Lemma~\ref{lem:normalset}, and $\Gamma$ is neither empty nor complete, which means that $\varnothing\subsetneq S\subsetneq T\setminus\{1\}$. Since $T$ is simple, $S$ generates $T$. Hence Corollary~\ref{cor:criterion} applies, and its part~\ref{item:as-canonical} gives $\Aut(T,S)=\Inn(T)$. Now $S$ is the union of members in a subset of $\Omega_T$, whose stabilizer in $\Out(T)$ is $\Aut(T,S)/\Inn(T)$ and is therefore trivial.

Conversely, let $\Delta\subseteq\Omega_T$ be a subset with trivial stabilizer in $\Out(T)$. Assume first that $\Out(T)\ne1$. Then $\Delta$ is neither empty nor the whole $\Omega_T$, and the union $S$ of the members of $\Delta$ is therefore a nonempty proper inverse-closed normal subset of $T$. As it generates $T$ with $\Aut(T,S)=\Inn(T)$, Corollary~\ref{cor:criterion} shows that $\Cay(T,S)$ is a GDRR. If $\Out(T)=1$, then take for $S$ a conjugacy class of involutions. This is again a nonempty proper inverse-closed normal subset, and $\Aut(T,S)=\Inn(T)$; hence the same argument gives a GDRR.

Now we have shown~\ref{item:gdrr-subset}. If $T$ admits a GDRR, then there exists a subset of $\Omega_T$ with trivial stabilizer in $\Out(T)$, which implies that $E=1$.
Next suppose that $\Out(T)$ is abelian and $E=1$. In an abelian permutation group, the stabilizer of a point is the kernel of the action on its orbit. Choose one point from each nontrivial $\Out(T)$-orbit on $\Omega_T$. Since distinct orbits are invariant, the stabilizer of the chosen set is the intersection of these orbit kernels, which is $E=1$. Statement~\ref{item:gdrr-subset} then shows that $T$ has a GDRR. This proves statement~\ref{item:gdrr-abelian}.

To prove~\ref{item:gdrr-arc}, first suppose that $\Gamma=\Cay(T,S)$ is an arc-transitive GDRR. Then
\[
\Aut(\Gamma)_1=\Inn(T)\times\langle\iota\rangle,
\]
whose orbits on $T\setminus\{1\}$ are precisely the members $C\cup C^{-1}$ of $\Omega_T$. Thus, arc-transitivity implies that $S=C\cup C^{-1}$ for some conjugacy class $C$ of $T$. As shown in the proof of~\ref{item:gdrr-subset}, the stabilizer of $S$ in $\Out(T)$ is trivial.

Conversely, suppose that some $C\cup C^{-1}\in\Omega_T$ has trivial stabilizer in $\Out(T)$, and let $S=C\cup C^{-1}$. The set $S$ is proper in $T\setminus\{1\}$, since a finite nonabelian simple group has elements of at least two distinct prime orders. Applying the converse argument for~\ref{item:gdrr-subset} to the singleton subset $\{C\cup C^{-1}\}$ shows that $\Cay(T,S)$ is a GDRR. Moreover, since $\Inn(T)\times\langle\iota\rangle$ is transitive on $S$, this GDRR is arc-transitive. This proves~\ref{item:gdrr-arc}.
\end{proof}

An immediate consequence of Theorem~\ref{thm:regularsubset} is the following existence result.

\begin{corollary}\label{thm:oddout}
Let $T$ be a finite nonabelian simple group with $|\Out(T)|$ odd. Then $T$ admits a GDRR.
\end{corollary}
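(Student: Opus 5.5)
We will apply Theorem~\ref{thm:regularsubset}\ref{item:gdrr-subset}, which reduces the corollary to exhibiting a subset of $\Omega_T$ whose stabilizer in $\Out(T)$ is trivial. The plan is to first show that the kernel $E$ is trivial, and then construct the required subset orbit by orbit.

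For $E=1$: by Theorem~\ref{thm:regularsubset}\ref{item:gdrr-kernel}, $E$ is an elementary abelian $2$-group. Since $|\Out(T)|$ is odd, every subgroup of $\Out(T)$ has odd order, so $E=1$. Thus $\Out(T)$ acts faithfully on $\Omega_T$.

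If $\Out(T)$ is abelian, Theorem~\ref{thm:regularsubset}\ref{item:gdrr-abelian} finishes the proof immediately. In general we cannot assume this, since odd-order groups need not be abelian (although for simple groups with $|\Out(T)|$ odd, $\Out(T)$ is typically a product of field-automorphism and diagonal-automorphism cyclic pieces). So we aim for an argument that avoids invoking the classification.

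The construction uses a standard fact: a group $X$ of odd order acting faithfully on a finite set $\Omega$ has a regular orbit on the power set of $\Omega$. That is, some subset $\Delta\subseteq\Omega$ has trivial setwise stabilizer. This follows from the known theorem (Gluck; Seress) that every solvable permutation group — and odd-order groups are solvable by Feit–Thompson — has a subset with trivial setwise stabilizer, with exceptions only when $X$ has even order involving $\Sy_3$- or $\Sy_4$-type pieces. More directly, the result that every permutation group of odd order admits a regular orbit on the power set is classical, due to Gluck ("Trivial set-stabilizers in finite permutation groups"). Applying it with $X=\Out(T)$ and $\Omega=\Omega_T$, using faithfulness from $E=1$, gives the required subset, and Theorem~\ref{thm:regularsubset}\ref{item:gdrr-subset} yields a GDRR.

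The main obstacle is justifying this power-set regular-orbit fact. The paper says the corollary is "immediate," so either it quotes Gluck's theorem, or it expects $\Out(T)$ to be abelian in this situation. As a fallback I would try to argue that for $|\Out(T)|$ odd, $\Out(T)$ is abelian. For groups of Lie type, $\Out(T)$ is built from diagonal, field and graph automorphisms. Odd order excludes graph automorphisms of order $2$, and triality has order $3$ but comes with a $2$-part in $D_4$. The remaining diagonal-by-field extension can be nonabelian, for example in $\PSL_n(q)$. This difficulty is why I would rely on Gluck's theorem instead.
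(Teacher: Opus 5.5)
Your argument is correct and is essentially the paper's proof. Theorem~\ref{thm:regularsubset}\ref{item:gdrr-kernel} together with $|\Out(T)|$ odd gives $E=1$. Gluck's result that every permutation group of odd order has a subset with trivial setwise stabilizer then supplies the required subset of $\Omega_T$, and Theorem~\ref{thm:regularsubset}\ref{item:gdrr-subset} produces the GDRR.

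Your side remarks are unnecessary and a little inaccurate, but they play no role in the proof. Your paraphrase of the solvable case is imprecise. Also, by the classification $\Out(T)$ is in fact cyclic whenever $|\Out(T)|$ is odd, so your $\PSL_n(q)$ worry never arises: for $n\geq3$ the graph automorphism makes $|\Out(T)|$ even.
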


\begin{proof}
By Theorem~\ref{thm:regularsubset}\ref{item:gdrr-kernel}, the kernel of $\Out(T)$ on $\Omega_T$ is an elementary abelian $2$-group. Since $|\Out(T)|$ is odd, this action is faithful. By~\cite[Corollary~1]{Gluck1983}, every permutation group of odd order has a subset with trivial stabilizer. Hence Theorem~\ref{thm:regularsubset}\ref{item:gdrr-subset} shows the existence of a GDRR of $T$.
\end{proof}

For outer automorphism groups of order two, Theorem~\ref{thm:regularsubset} gives a particularly convenient necessary and sufficient condition for a finite nonabelian simple group to admit a GDRR.

\begin{corollary}\label{cor:outtwo}
Let $T$ be a finite nonabelian simple group with $|\Out(T)|=2$, and let $\alpha\in\Aut(T)\setminus\Inn(T)$. Then $T$ admits a GDRR if and only if there exists a conjugacy class $C$ of $T$ such that $C^\alpha\notin\{C,C^{-1}\}$.
\end{corollary}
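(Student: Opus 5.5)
Since $|\Out(T)|=2$, the group $\Out(T)=\{1,\bar\alpha\}$ is abelian, where $\bar\alpha=\Inn(T)\alpha$. So I plan to apply Theorem~\ref{thm:regularsubset}\ref{item:gdrr-abelian}: $T$ admits a GDRR if and only if the kernel $E$ of $\Out(T)$ on $\Omega_T$ is trivial. It remains to show that $E=1$ is equivalent to the existence of a conjugacy class $C$ with $C^\alpha\notin\{C,C^{-1}\}$.

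First, because $\Out(T)$ has only the two elements $1$ and $\bar\alpha$, we have $E=1$ exactly when $\bar\alpha\notin E$, that is, when $\bar\alpha$ moves some member of $\Omega_T$. Note that $\bar\alpha$ acts on $\Omega_T$ via $C\cup C^{-1}\mapsto C^\alpha\cup(C^{\alpha})^{-1}$. This is well defined, since $\alpha$ maps conjugacy classes to conjugacy classes and commutes with inversion. Also, the action does not depend on the chosen representative $\alpha$, because inner automorphisms fix every conjugacy class.

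The key step is then the equivalence
\[
C^\alpha\cup(C^\alpha)^{-1}=C\cup C^{-1}\iff C^\alpha\in\{C,C^{-1}\}.
\]
I will prove it as follows. Both $C$ and $C^{-1}$ are conjugacy classes, and distinct classes are disjoint. Hence the class $C^\alpha$, being contained in $C\cup C^{-1}$, must equal $C$ or $C^{-1}$. Conversely, if $C^\alpha$ is $C$ or $C^{-1}$, then the union $C^\alpha\cup(C^\alpha)^{-1}$ equals $C\cup C^{-1}$.

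Putting this together, $\bar\alpha$ fixes every member of $\Omega_T$ if and only if $C^\alpha\in\{C,C^{-1}\}$ for every non-identity class $C$. The identity class is fixed by $\alpha$ in any case, so it does not affect the condition. This yields the corollary. I expect no real obstacle here; the only care needed is in checking that the $\Out(T)$-action on $\Omega_T$ is computed via any representative $\alpha$.
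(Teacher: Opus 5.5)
Your proposal is correct and follows essentially the same route as the paper: since $\Out(T)\cong\C_2$ is abelian, Theorem~\ref{thm:regularsubset}\ref{item:gdrr-abelian} reduces the question to faithfulness of the action on $\Omega_T$, which amounts to $\alpha$ moving some $C\cup C^{-1}$, i.e.\ $C^\alpha\notin\{C,C^{-1}\}$. Your explicit check of this last equivalence, using disjointness of conjugacy classes and the triviality of the identity class, fills in a detail the paper leaves implicit.
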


\begin{proof}
By Theorem~\ref{thm:regularsubset}\ref{item:gdrr-abelian}, $T$ admits a GDRR if and only if the action of $\Out(T)$ on $\Omega_T$ is faithful. Since $\Out(T)$ has order two, this is equivalent to its nonidentity element moving some member $C\cup C^{-1}$ of $\Omega_T$, or equivalently, $C^\alpha\notin\{C,C^{-1}\}$.
\end{proof}

We now apply these criteria to alternating groups.

\begin{proof}[\rm\textbf{Proof of Proposition~\ref{prop:altgdrr}}]
Let $T=\A_n$. First assume that $n=6$. Take
\[
S=(1,2,3)^T\cup(1,2,3,4,5)^T.
\]
Then $S$ is an inverse-closed normal subset of $T$ such that $\varnothing\subsetneq S\subsetneq T\setminus\{1\}$.
The stabilizer of $(1,2,3)^T$ in $\Aut(T)$ is the subgroup induced by conjugation in $\Sy_6$ (see~\cite{Atlas1985}, for example).
Moreover, conjugation by each element of $\Sy_6\setminus T$ sends $(1,2,3,4,5)^T$ to the other $T$-class $(1,2,3,5,4)^T$ of $5$-cycles.
Therefore, $\Aut(T,S)=\Inn(T)$, and hence Corollary~\ref{cor:criterion} shows that $\Cay(T,S)$ is a GDRR.

We next apply Theorem~\ref{thm:regularsubset}\ref{item:gdrr-arc} to show the nonexistence of arc-transitive GDRRs for $n=6$. Note that $\Out(T)\cong\C_2^2$, and the nonidentity conjugacy classes of $T$ are
\[
2A,\quad 3A,\quad 3B,\quad 4A,\quad 5A,\quad 5B
\]
in the notation of~\cite{Atlas1985}. Since automorphisms preserve element orders, every orbit of $\Out(T)$ on $\Omega_T$ has size at most $2$, which is smaller than $|\Out(T)|$. Thus no member of $\Omega_T$ has trivial stabilizer in $\Out(T)$, and Theorem~\ref{thm:regularsubset}\ref{item:gdrr-arc} shows that $\A_6$ has no arc-transitive GDRR.

Next assume that $n\notin\{4,6,7,8,12\}$. In this case, $\Out(T)$ has order two, and $\Aut(T)$ is induced by conjugation in $\Sy_n$. Let
\[
x=
\begin{cases}
(1,2,\ldots,n)&\text{if }n\equiv1\pmod{4}\\
(2,3,\ldots,n)&\text{if }n\equiv2\pmod{4}\\
(2,3,4)(5,6,\ldots,n)&\text{if }n\equiv3\pmod{4}\\
(2,3,4)(5,6,7,8,9)(10,11,\ldots,n)&\text{if }n\equiv0\pmod{4}
\end{cases}
\]
be an element of $\Sy_n$, and let $\mathcal O_1,\ldots,\mathcal O_r$ be the cycles in the cycle decomposition of $x$ (viewing fixed points as $1$-cycles). Observe $x\in\A_n$ and that $\mathcal O_1,\ldots,\mathcal O_r$ have pairwise distinct odd lengths.
Let $y\in\Sy_n$ such that, for each cycle $\mathcal O_i=(v_0,v_1,\dots,v_{k-1})$ of length $k$, the permutation $y$ fixes $v_0$ and maps $v_j$ to $v_{k-j}$ for $j\in\{1,\ldots,k-1\}$. For example, when $n\equiv1\pmod{4}$, the permutation $y=(2,n)(3,n-1)\cdots((n+1)/2,(n+3)/2)$. In all cases, $y\in\A_n$ and $x^y=x^{-1}$.
Let $C=x^T$. Then $C=C^{-1}$ because $x^{-1}=x^y$. Choose an odd permutation $z\in\Sy_n$, and let $\alpha$ be the outer automorphism of $T$ induced by $z$. If $C^\alpha=C$, then $x^z=x^t$ for some $t\in T$, and hence $zt^{-1}\in\Cen_{\Sy_n}(x)$. Since the cycle lengths are distinct and odd, $\Cen_{\Sy_n}(x)$ is generated by the disjoint cycles of $x$ and is contained in $T$, a contradiction. Thus $C^\alpha\notin\{C,C^{-1}\}$, and Corollary~\ref{cor:outtwo} asserts that $T$ has a GDRR.
Since $C$ is a single conjugacy class, $L(T)R(T)=R(T)\rtimes\Inn(T)$ is transitive on the arcs of $\Cay(T,C)$. Thus this GDRR is arc-transitive.

Finally, assume that $n\in\{4,7,8,12\}$. Then each element $x$ of $T$ is such that $x^T\cup(x^T)^{-1}$ is a single conjugacy class of $\Sy_n$ (this can be seen either by direct verification or from the splitting criterion for $\A_n$: a split $\Sy_n$-class has pairwise distinct odd cycle lengths, and for $n\in\{4,7,8,12\}$ the permutation reversing all its cycles is odd). Thus, each inverse-closed normal subset $S$ of $T$ satisfies $\Aut(T,S)\geqslant\Sy_n$. Moreover, Lemma~\ref{lem:normalset} implies that a GDRR of $T$ is necessarily $\Cay(T,S)$ for some inverse-closed normal subset $S$ of $T$. Hence we conclude from~\eqref{eq:canonical} that $T$ has no GDRR.
\end{proof}

\section{Concluding remarks}\label{sec:concluding}

We first give the details of the gap in the proof of the main result of~\cite{TianLi2025}. Let $G$ be an almost simple group with socle $T$, and let $A=\Aut(\Cay(G,S))$, where $S\subsetneq G\setminus\{1\}$ is a generating set for $G$ that is normal and inverse-closed (thus $S$ is a $C$-subset in the sense of~\cite{TianLi2025}).
The conclusion of~\cite[Lemma~2.3]{TianLi2025}, which asserts that the $T$-cosets form an $A$-invariant partition, is in fact false. For a counterexample, take $G$ to be any almost simple group with $|G/T|>2$ such that $G$ has a subgroup $H$ of index $2$ (for example, one may take $G=\Aut(\A_6)$ and $H=\Sy_6$). Then $\Cay(G,G\setminus H)$ is the complete bipartite graph $K_{|H|,|H|}$ with parts $H$ and $G\setminus H$. If $t\in T\setminus\{1\}$ and $h\in H\setminus T$, then the transposition $(t,h)$ is an element of $A$ that maps $T$ to a set that intersects $T$ but is not equal to $T$. Hence $T$ is not a block, disproving~\cite[Lemma~2.3]{TianLi2025}.

In the proof of~\cite[Lemma~2.3]{TianLi2025}, it is asserted that $R(G)\trianglelefteq A$, and hence that $R(T)\trianglelefteq A$. This assertion is false: the inversion $\iota_G$ belongs to $A$, and
\[
\iota_G R(G)\iota_G=L(G)\neq R(G)
\]
as $G$ is nonabelian. Consequently, the product $R(T)A_1$ used there has not been shown to be a subgroup, and the ensuing deduction that the $T$-cosets form an $A$-invariant partition is unsupported. This partition is then used in the proofs of~\cite[Proposition~3.3 and Lemma~3.5]{TianLi2025}, on which the proof of their main theorem depends. Nevertheless, the conclusion of that theorem is valid: as observed after Corollary~\ref{cor:canonicalcriterion}, their generation hypothesis implies $P=G$, and Corollary~\ref{cor:canonicalcriterion} therefore supplies a proof. Our result also removes the assumption that $G/T$ is abelian.

Now we turn to the two false assertions in the literature concerning the existence of GDRRs on alternating groups. The first is the assertion in~\cite[Corollary~5.5]{Zgrablic2002} that no alternating group $\A_n$ with $n\geq4$ admits a GDRR. Our Proposition~\ref{prop:altgdrr}\ref{item:alt-gdrr} already shows that this is incorrect. From the brief proof given in~\cite{Zgrablic2002}, it seems that the mistake lies in overlooking the fact that a conjugacy class of $\Sy_n$ may split into two classes in $\A_n$, and either half may be inverse-closed.

Pan later asserted in~\cite[Theorem~3.3]{Pan2020} that every $\A_n$ with $n\geq5$ admits an arc-transitive GDRR. The proposed graph is $\Cay(\A_n,C)$, where $C$ is the conjugacy class of $3$-cycles. However, conjugation by every odd permutation preserves $C$, and hence $\Aut(\A_n,C)$ is strictly larger than $\Inn(\A_n)$. Thus the proposed graph is not a GDRR. More specifically, for $g=(1,2,3)$ and $n\geq5$,
\[
\Cen_{\Sy_n}(g)=\langle g\rangle\times\Sy_{\{4,\ldots,n\}},
\]
whence the index of $\Cen_{\A_n}(g)$ in $\Cen_{\Sy_n}(g)$ is $2$. In the proof of~\cite[Theorem~3.3]{Pan2020}, this index is incorrectly claimed to be $1$ by an erroneous application of~\cite[Lemma~3.2]{Pan2020}. The separate argument given there for $n=6$ has the same problem: the subgroup of $\Aut(\A_6)$ induced by $\Sy_6$ contains an outer automorphism centralizing $g$. Consequently, the construction in~\cite{Pan2020} does not prove the asserted result. Our Proposition~\ref{prop:altgdrr}\ref{item:alt-arc-gdrr} gives the corrected conclusion: the assertion is false precisely for $n\in\{6,7,8,12\}$.

Finally, the criteria developed in Subsection~\ref{subsec:gdrr-existence} can be used to determine precisely which finite nonabelian simple groups other than the alternating groups admit GDRRs and which admit arc-transitive GDRRs. More precisely, these two problems reduce to determining whether the action of $\Out(T)$ on $\Omega_T$ has, respectively, a subset or a member with trivial stabilizer. Carrying this out for all families requires a substantial case-by-case analysis and lies beyond the main focus of this paper.

\bigskip
\noindent\textsc{Acknowledgement.} Cao is supported by the National Natural Science Foundation of China (12301431). Lv is supported by the National Natural Science Foundation of China (12571347 \& 12131011).

\end{document}